\documentclass[12pt,a4paper]{amsart}

\usepackage{amsmath,amsfonts,amssymb,amsthm}
\usepackage{mathtools,mathdots}
 
\theoremstyle{plain}

\newtheorem{thm}{Theorem}[section]
\newtheorem{cor}[thm]{Corollary}
\newtheorem{lem}[thm]{Lemma}
\newtheorem{prop}[thm]{Proposition}

\newtheorem{thmABC}{Theorem}

\newtheorem{corABC}[thmABC]{Corollary}

\theoremstyle{definition}

\newtheorem{defn}[thm]{Definition}

\newtheorem*{defn*}{Definition}
\newtheorem*{ackn*}{Acknowledgement}


\numberwithin{equation}{section}


\newcommand{\norm}[1]{{\lvert #1 \rvert}}
\newcommand{\wt}[1]{\widetilde{#1}}
\newcommand{\wh}[1]{\widehat{#1}}

\newcommand{\cwr}{\mbox{\textnormal{\small\textcircled{$\wr$}}}}

\newcommand{\mP}{\mathcal{P}}
\newcommand{\mS}{\mathcal{S}}

\newcommand{\mX}{\mathcal{X}}

\newcommand{\Sym}{\mathrm{Sym}}


\title[Hereditarily just infinite profinite wreath products]{Embedding
  properties of hereditarily just infinite profinite wreath products}

\author{Benjamin Klopsch} \address{Mathematisches Institut der
  Heinrich-Heine-Universit\"at, Universit\"atsstr.\ 1, 40225
  D\"usseldorf, Germany}\email{klopsch@math.uni-duesseldorf.de}

\author{Matteo Vannacci} \address{Mathematisches Institut der
  Heinrich-Heine-Universit\"at, Universit\"atsstr.\ 1, 40225
  D\"usseldorf, Germany}\email{matteo.vannacci@uni-duesseldorf.de}

\keywords{Hereditarily just infinite groups, iterated wreath products,
embedding properties, co-Hopfian groups}

\subjclass[2010]{Primary 20E18; Secondary 20E22}


\begin{document} 

\maketitle


\begin{abstract}
  We study infinitely iterated wreath products of finite permutation
  groups w.r.t.\ product actions. In particular, we prove that, for
  every non-empty class of finite simple groups $\mathcal{X}$, there
  exists a finitely generated hereditarily just infinite profinite
  group $W$ with composition factors in $\mathcal{X}$ such that any
  countably based profinite group with composition factors in
  $\mathcal{X}$ can be embedded into $W$. Additionally we investigate
  when infinitely iterated wreath products of finite simple groups
  w.r.t.\ product actions are co-Hopfian or non-co-Hopfian.
\end{abstract}

\section{Introduction and main results} 

\subsection{Introduction} A profinite group $G$ is just infinite if
$G$ is infinite and every non-trivial closed normal subgroup
$N \trianglelefteq_\mathrm{c} G$ is open in~$G$.  While a complete
classification of just infinite profinite groups is way out of reach,
there is a natural interest in understanding as much about their
structure as possible.  It is known (e.g.,
see~\cite[Theorem~3]{grigorchuk:justinfinitebranch}) that every just
infinite profinite group either is a profinite branch group or
contains an open subgroup isomorphic to the direct product of a finite
number of copies of a hereditarily just infinite profinite group,
where a profinite group $G$ is called \emph{hereditarily just
  infinite} if every open subgroup $H \le_\mathrm{o} G$ is just
infinite.  While branch groups have been studied quite extensively
(e.g., see~\cite{bgs:branchgroups}) comparatively little is known
about hereditarily just infinite groups.

Well-known families of hereditarily just infinite profinite groups are
supplied by compact open subgroups of simple algebraic groups over
non-archimedean local fields, e.g., groups such as
$\mathrm{SL}_n(\mathbb{Z}_p)$ or
$\mathrm{SL}_n(\mathbb{F}_p[\![t]\!])$; see~\cite{linear prop}.  In
addition there are some `sporadic' non-linear examples, such as
$\mathrm{Aut}(\mathbb{F}_p[\![t]\!])$ and certain subgroups thereof;
see~\cite{camina:nottinghamgroup,bk:nottinghamgroup,MR2047455}. In
\cite[Theorem~A]{wilson:largehereditarily}, J.~S.~Wilson gave the
first examples of hereditarily just infinite profinite groups that are
not virtually pro-$p$ for any prime~$p$.  They arise as certain
iterated wreath products of non-abelian finite simple groups, and
retrospectively the construction is very flexible.  In
\cite{wilson:largehereditarily,quick:probabilisticgeneration,MR3466595}
some embedding, generation and presentation properties of such groups
have been established, but many of their features are not yet fully
understood.  In passing, we remark that A.~Lucchini has used
crown-based powers to manufacture further examples of hereditarily
just infinite profinite groups;
see~\cite{lucchini:a2generated}. Interesting new types of hereditarily
just infinite pro-$p$ groups were constructed by Ershov and Jaikin
in~\cite{ej:positivedeficiency}.

In this paper we focus on hereditarily just infinite profinite groups
that are obtained as inverse limits of iterated wreath products
w.r.t.\ product actions.  They arise as follows; see
Section~\ref{sec:prelim} for a more detailed description.  Let
$\mS = (S_k)_{k\in \mathbb{N} \cup \{0\}}$, with $S_k \le \Sym(\Omega_k)$, be a
sequence of finite transitive permutation groups.  The inverse limit
\[
W^\mathrm{pa}(\mS) = \varprojlim W^\mathrm{pa}_n
\]
of the inverse system
$W^\mathrm{pa}_0 \twoheadleftarrow W^\mathrm{pa}_1 \twoheadleftarrow
\ldots$ of finite iterated wreath products w.r.t.\ product actions
\[
  W^\mathrm{pa}_n  = S_n \,\cwr\, (S_{n-1} \,\cwr\, ( \cdots \,\cwr\,
  S_0 )) \le
                    \Sym(\wh{\Omega}_n) \quad \text{for $\wh{\Omega}_n =
                      \Omega_n^{\big(\Omega_{n-1}^{\big(\iddots^{\Omega_0}\big)} \big)}$}. 
\]
is called the \emph{infinitely iterated wreath product of type $\mS$
  w.r.t.\ product actions}.

By \cite[Theorem~6.2]{reid:characterization} and
\cite{quick:probabilisticgeneration}, every infinitely iterated
wreath product w.r.t.\ product actions $W^\mathrm{pa}(\mS)$, based on a
sequence $\mS$ of finite non-abelian simple permutation groups, is a
finitely generated hereditarily just infinite profinite group that is
not virtually pro-$p$ for any prime~$p$.  

\subsection{Main results} The aim of this paper is to study embedding
properties of infinitely iterated wreath product of finite non-abelian
simple groups w.r.t.\ product actions.  Specifically, we are interested
in embeddings of countably based profinite groups with specified
(topological) composition factors into such wreath products.
By~\cite[Theorem A]{wilson:largehereditarily} and
\cite{quick:probabilisticgeneration}, there exists a finitely
generated hereditarily just infinite profinite group $\mathcal{G}$
such that every countably based profinite group can be embedded into
$\mathcal{G}$ as a closed subgroup.  Our first theorem is a refinement
of this result to profinite groups with restricted composition
factors.  Recall that, by virtue of the Jordan--H\"{o}lder Theorem for
finite groups, every countably based profinite group $G$ has a
countable set of composition factors with well-defined multiplicities;
cf.\ Section~\ref{sec:prelim}.

\begin{thmABC}\label{thm:A}
  Let $\mS = (S_n )_{n\in \mathbb{N} \cup \{0\}}$ be a sequence of
  finite simple groups.  Then every profinite group $G$ that admits a
  composition series
  \[
  G = G_1 \triangleright G_2 \triangleright \ldots \quad  \text{with
    factors} \quad
  G_k/G_{k+1} \cong S_k, \, k \in \mathbb{N},
  \]
  embeds as a closed subgroup into the infinitely iterated wreath
  product $W^\mathrm{pa}(\mS)$ of type
  $\mS = ( S_k )_{k \in \mathbb{N} \cup \{ 0\}}$ w.r.t.\ product
  actions, where each $S_k \le \Sym(S_k)$ acts regularly on itself by
  right multiplication.
\end{thmABC}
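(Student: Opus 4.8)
The plan is to write $G$ as the inverse limit of its finite quotients $Q_n := G/G_{n+1}$, so that $Q_0 = 1$, $Q_1 \cong S_1$, and for each $n \ge 1$ there is a short exact sequence $1 \to S_n \to Q_n \to Q_{n-1} \to 1$. I will construct, by induction on $n$, embeddings $\iota_n \colon Q_n \hookrightarrow W^{\mathrm{pa}}_n$ that are compatible with the canonical projections, i.e.\ $\pi_n \circ \iota_n = \iota_{n-1} \circ \bar{\pi}_n$, where $\pi_n \colon W^{\mathrm{pa}}_n \to W^{\mathrm{pa}}_{n-1}$ is the homomorphism killing the top base group $B_n = S_n^{\wh\Omega_{n-1}}$ and $\bar{\pi}_n \colon Q_n \to Q_{n-1}$ is the natural map. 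Taking inverse limits then produces a continuous injective homomorphism $G = \varprojlim_n Q_n \to \varprojlim_n W^{\mathrm{pa}}_n = W^{\mathrm{pa}}(\mS)$; since all groups in sight are profinite, such a map is automatically a closed embedding.

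To run the induction I strengthen the hypothesis by also demanding that $\iota_n(Q_n)$ have a regular orbit on $\wh\Omega_n$. The structural observation that drives everything is that, because every $S_k$ acts regularly on itself, the top base group $B_n = S_n^{\wh\Omega_{n-1}}$ acts regularly on $\wh\Omega_n = S_n^{\wh\Omega_{n-1}}$, while the complement $W^{\mathrm{pa}}_{n-1}$ fixes the constant basepoint and merely permutes coordinates according to its action on $\wh\Omega_{n-1}$. Assuming $\iota_{n-1}(Q_{n-1})$ has a regular orbit $R \subseteq \wh\Omega_{n-1}$, so that $R \cong Q_{n-1}$ as a $Q_{n-1}$-set, passing to the coordinates indexed by $R$ identifies $S_n^{R} \rtimes \iota_{n-1}(Q_{n-1})$ with the regular (imprimitive) wreath product $S_n \wr Q_{n-1}$, a subgroup of $B_n \rtimes \iota_{n-1}(Q_{n-1}) \le W^{\mathrm{pa}}_n$. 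Feeding the extension $1 \to S_n \to Q_n \to Q_{n-1} \to 1$ into the Kaloujnine--Krasner embedding then yields $Q_n \hookrightarrow S_n \wr Q_{n-1}$, and I take $\iota_n$ to be the resulting map into $W^{\mathrm{pa}}_n$. Compatibility is immediate, since $\pi_n$ kills $B_n \supseteq S_n^R$ and restricts to the identity on $\iota_{n-1}(Q_{n-1})$.

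The crux --- and the step I expect to be the main obstacle --- is to propagate the regular-orbit condition, i.e.\ to find $g \in \wh\Omega_n = S_n^{\wh\Omega_{n-1}}$ whose $\iota_n(Q_n)$-stabiliser is trivial. Using that $B_n$ acts regularly, the stabiliser equation for an element $(\phi_e, q) \in \iota_n(Q_n)$, with $q$ its image in $Q_{n-1}$ and $\phi_e \in S_n^{R}$ the Kaloujnine--Krasner cocycle, rearranges to $\phi_e = g\cdot(q \cdot g)^{-1}$. The coordinate of $R$ at the identity records the kernel $S_n$ faithfully, so the case $q = 1$ forces $e = 1$ for every choice of $g$; the only threat comes from $q \neq 1$. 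For fixed such $q$ the admissible cocycles $\phi_e$ form a single coset of cardinality $\norm{S_n}$ in $B_n$, while each fibre of $g \mapsto g\cdot(q\cdot g)^{-1}$ is a coset of $\mathrm{Fix}_{B_n}(q) = S_n^{\Delta_q}$, where $\Delta_q \subseteq \wh\Omega_{n-1}$ is the set of $q$-fixed points; since $q$ is fixed-point-free on $R$ one has $\norm{\Delta_q} \le \norm{\wh\Omega_{n-1}} - \norm{Q_{n-1}}$. Hence the number of ``bad'' $g$ is at most $(\norm{Q_{n-1}}-1)\,\norm{S_n}^{\,\norm{\wh\Omega_{n-1}} - \norm{Q_{n-1}} + 1}$, a proportion at most $(\norm{Q_{n-1}}-1)/\norm{S_n}^{\,\norm{Q_{n-1}}-1} < 1$ of the $\norm{B_n} = \norm{S_n}^{\norm{\wh\Omega_{n-1}}}$ points of $\wh\Omega_n$. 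A suitable $g$ therefore exists, completing the inductive step; this is exactly where both the regularity of the orbit (so that $q$ fixes no point of $R$) and $\norm{S_n} \ge 2$ are used.

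The induction starts from $Q_0 = 1 \hookrightarrow W^{\mathrm{pa}}_0 = S_0$, where any single point of $\wh\Omega_0 = S_0$ is a regular orbit of the trivial group; thus the auxiliary factor $S_0$ only seeds the recursion and carries none of $G$. Assembling the compatible embeddings $\iota_n$ and passing to the limit gives the desired closed embedding $G \hookrightarrow W^{\mathrm{pa}}(\mS)$, with each composition factor $S_k$ of $G$ absorbed at the $k$-th layer of the product-action wreath product.
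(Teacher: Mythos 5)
Your overall strategy is a genuinely different route from the paper's: you build compatible embeddings $Q_n = G/G_{n+1} \hookrightarrow W^{\mathrm{pa}}_n$ directly by Kaloujnine--Krasner, using a regular orbit of $\iota_{n-1}(Q_{n-1})$ on $\wh{\Omega}_{n-1}$ to locate a copy of the regular wreath product $S_n \wr Q_{n-1}$ inside $S_n \,\cwr\, W^{\mathrm{pa}}_{n-1}$. The paper instead first embeds $G$ into the imprimitive iterated wreath product $W^{\mathrm{ia}}(\mS')$ via the coset tree (essentially your KK step, done once and for all), and then P-embeds $W^{\mathrm{ia}}(\mS')$ into $W^{\mathrm{pa}}(\mS)$ using \emph{blocks} of functions, i.e.\ P-embeddings of degree $r \ge 2$ rather than single points. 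That design choice is not cosmetic: it is exactly what lets the paper avoid the question you isolate as the crux, namely whether $\iota_n(Q_n)$ has a \emph{regular orbit} (a degree-$1$ embedding of the regular $Q_n$-set) on $\wh{\Omega}_n$.

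And it is precisely at that crux that your argument has a genuine gap. The kernel of $g \mapsto g\cdot(q\cdot g)^{-1}$ is $\mathrm{Fix}_{B_n}(q) = \{h \in S_n^{\wh{\Omega}_{n-1}} : h(\tau^{q^{-1}}) = h(\tau) \text{ for all } \tau\}$, i.e.\ the functions \emph{constant on $\langle q\rangle$-orbits}, not the functions supported on the fixed-point set $\Delta_q$. Its order is $\norm{S_n}^{c(q)}$ where $c(q)$ is the number of $\langle q\rangle$-orbits on $\wh{\Omega}_{n-1}$, and $c(q) = \norm{\Delta_q} + \#\{\text{non-singleton orbits}\} > \norm{\Delta_q}$ for $q \ne 1$. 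The best bound available from ``$q$ is fixed-point-free on $R$'' is $c(q) \le \norm{\wh{\Omega}_{n-1}} - \norm{Q_{n-1}}/2$, so the corrected union bound for the proportion of bad points is $(\norm{Q_{n-1}}-1)\,\norm{S_n}^{\,1-\norm{Q_{n-1}}/2}$, which is \emph{not} less than $1$ in general: it equals $1$ already for $\norm{Q_{n-1}}=2$, and exceeds $1$ for, e.g., $\norm{Q_{n-1}}=3$, $\norm{S_n}\in\{2,3\}$. Worse, the danger is real and not just an artefact of the union bound: if some $q\ne 1$ in $\iota_{n-1}(Q_{n-1})$ fixed every point of $\wh{\Omega}_{n-1}\smallsetminus R$, then (for $\norm{Q_{n-1}}=2$, $S_n=C_2$, split extension) the two elements of $\iota_n(Q_n)$ over $q$ would between them fix \emph{every} point of $\wh{\Omega}_n$, and no regular orbit would exist. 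So to close the argument you must additionally control fixed-point sets, e.g.\ by strengthening the induction to bound $\norm{\Delta_q}/\norm{\wh{\Omega}_{n-1}}$ away from $1$ for all $q \ne 1$ in $\iota_{n-1}(Q_{n-1})$ (which is plausible, since elements with non-trivial base-group component act freely, and $\norm{\wh{\Omega}_{n-1}}$ dwarfs $\norm{Q_n}$), or by exploiting that most cocycles $\phi_e$ violate the cycle-product consistency condition and hence fix nothing. As written, the inductive step does not go through, and this is the one point where your proof and the paper's machinery of degree-$\ge 2$ P-embeddings (Proposition~\ref{prop:P-embed}) genuinely diverge.
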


We emphasise that Theorem~\ref{thm:A} includes the possibility of some
of the simple groups $S_k$ being cyclic.  Making further adjustments,
we construct for any given class of finite simple groups $\mX$ an
infinitely iterated wreath product $W^\mathrm{pa}(\mS_\mX)$ with
composition factors in $\mX$ that satisfies a `universal property' for
embedding countably based profinite group with composition factors
in~$\mX$.  The construction is flexible and the resulting group is in
general not unique.

\begin{corABC}\label{cor:B}
  Let $\mX$ be a non-empty class of finite simple groups.  Then there
  exists a sequence $\mS_\mX = (S_k)_{k \in \mathbb{N}}$ of groups
  $S_k \in \mX$, where each $S_k \le \Sym(S_k)$ acts regularly on itself
  by right multiplication, such that every countably based profinite
  group with composition factors in $\mathcal{X}$ embeds as a closed
  subgroup into the infinitely iterated wreath product
  $W^\mathrm{pa}(\mS_\mX)$ of type~$\mS_\mX$.
\end{corABC}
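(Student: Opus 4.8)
The plan is to deduce Corollary~\ref{cor:B} from Theorem~\ref{thm:A} by constructing a single sequence $\mS_\mX$ that is rich enough to realise simultaneously every possible composition series with factors in $\mX$. The key observation is that Theorem~\ref{thm:A} already handles a \emph{fixed} sequence $\mS$: if a profinite group $G$ has composition factors matching $\mS$ term by term, then $G$ embeds into $W^\mathrm{pa}(\mS)$. So the task for the corollary is purely one of \emph{universality of the index sequence}: I must exhibit one sequence $\mS_\mX = (S_k)_{k \in \mathbb{N}}$, with all $S_k \in \mX$, into whose associated wreath product every sequence of the form allowed by Theorem~\ref{thm:A} (with factors in $\mX$) can be absorbed.

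\smallskip

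First I would set up a bookkeeping device. Since $\mX$ is a non-empty class of finite simple groups, it is in particular a countable collection (there are only countably many isomorphism types of finite simple groups), so I can fix an enumeration and arrange the groups of $\mX$ into a sequence $\mS_\mX$ in which \emph{each isomorphism type in $\mX$ occurs infinitely often and, moreover, occurs along infinitely many ``thinned-out'' subsequences}. Concretely I would choose $\mS_\mX$ so that for \emph{any} prescribed sequence $\mT = (T_k)_{k \in \mathbb{N}}$ of members of $\mX$, there is a strictly increasing selection of indices $k_1 < k_2 < \cdots$ with $S_{k_j} \cong T_j$ for all $j$; a standard diagonal/interleaving enumeration of the countable set $\mX^{<\infty}$ of finite strings over $\mX$ achieves exactly this. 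The point is that $\mS_\mX$ must contain \emph{every finite pattern} over $\mX$ as a subsequence, infinitely often.

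\smallskip

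Next I would pass from ``subsequence'' to ``embedding of the wreath product''. Given a countably based profinite group $G$ with composition factors in $\mX$, I fix a composition series for $G$ whose successive quotients are $T_1, T_2, \ldots \in \mX$; by Theorem~\ref{thm:A}, $G$ embeds into $W^\mathrm{pa}(\mT)$ for this particular sequence $\mT$. It then suffices to show that, whenever $\mT$ is a subsequence of $\mS_\mX$ in the above sense, the wreath product $W^\mathrm{pa}(\mT)$ embeds as a closed subgroup of $W^\mathrm{pa}(\mS_\mX)$. This is the analogue, at the level of the limit groups, of the elementary fact that forgetting the intermediate factors in an iterated (product-action) wreath product realises $W^\mathrm{pa}(\mT)$ inside $W^\mathrm{pa}(\mS_\mX)$: the layers of $\mS_\mX$ indexed by $k \notin \{k_1, k_2, \ldots\}$ can be ``collapsed onto a point'' by choosing a base point in each $\Omega_k = S_k$, and the product-action wreathing is compatible with taking these diagonal/base-point sections compatibly through the inverse system. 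I would verify this compatibility at each finite stage $W^\mathrm{pa}_n$ and then pass to the inverse limit, obtaining a closed embedding $W^\mathrm{pa}(\mT) \hookrightarrow W^\mathrm{pa}(\mS_\mX)$.

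\smallskip

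The main obstacle is precisely this last compatibility claim: one must check that selecting a subsequence of the factors, together with a coherent choice of base points in the deleted layers, yields an honest \emph{closed topological} embedding of profinite groups, and not merely an abstract group monomorphism at each finite level. The subtlety lies in the product action: unlike the imprimitive (``$\,\cwr\,$ via the full permutational wreath'') construction, where a base-point section is entirely transparent, in the product action the permutation degrees multiply as towers of exponents, so I must confirm that the base-point sections at consecutive levels are compatible with the bonding maps $W^\mathrm{pa}_{n} \twoheadleftarrow W^\mathrm{pa}_{n+1}$ of the two inverse systems. Once that diagram is seen to commute, the embedding of limits is automatic and closedness follows from compactness. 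I expect the finite-level verification to be a short but careful computation with the structure maps described in Section~\ref{sec:prelim}, after which the corollary follows by combining this embedding with Theorem~\ref{thm:A}.
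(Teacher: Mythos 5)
Your proposal is correct and follows essentially the same route as the paper: the authors take the universal sequence $X_1,\; X_1,X_2,\; X_1,X_2,X_3,\;\ldots$ over representatives of the isomorphism types in $\mX$ and then combine Theorem~\ref{thm:A} with Proposition~\ref{prop:subseq-constr}. That proposition (proved via Lemma~\ref{lem:cwrforgettingmiddle}) is precisely the subsequence-embedding step you identify as the main obstacle, carried out by the same device you sketch --- collapsing the deleted layers onto base points and checking compatibility with the bonding maps at each finite level before passing to the inverse limit.
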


Our proof of Theorem~\ref{thm:A} relies on an apparently little known
construction to embed iterated wreath products w.r.t.\ imprimitive
actions into iterated wreath products w.r.t.\ product actions; see
Proposition~\ref {prop:P-embed} and its
Corollary~\ref{cor:finite-iterated-wrp}.

Furthermore we are interested in when infinitely iterated wreath
products of finite simple groups w.r.t.\ product actions are or fail
to be \emph{co-Hopfian}.  Recall that a profinite group $G$ is
\emph{co-Hopfian}, if there exists no proper closed subgroup
$H \lneqq_\mathrm{c} G$ with $H \cong G$.  Our description of
non-co-Hopfian groups relies on the concept of `permutational
isomorphism' of permutation groups; compare
\cite[p.~17]{dixon:permutationgroups} and see
Definition~\ref{defn:Pembedded} for a natural generalisation.

\begin{defn*} We say that a permutation group $H \le \Sym(\Delta)$ is
  \emph{permutationally isomorphic to a subgroup} of a permutation group
  $G \le \Sym(\Omega)$ if there exist $\wt{H} \le G$ and an
  $\wt{H}$-invariant subset $\wt{\Delta} \subseteq \Omega$ such that
  $H \le \Sym(\Delta)$ is equivalent to the faithfully induced
  permutation group
  $\wt{H} \vert_{\wt{\Delta}} \le \Sym(\wt{\Delta})$: there exist a
  group isomorphism $\iota \colon H \to \wt{H}$ and a bijection
  $\gamma \colon \Delta \to \wt{\Delta}$ such that
  $\gamma(\delta^h) = \gamma(\delta)^{\iota(h)}$ for all
  $\delta \in \Delta$ and $h \in H$. For instance, the permutation
  group $H = \langle (1\ 2) \rangle \le \Sym(2)$ is permutationally
  isomorphic to a subgroup of
  $G = \langle (1\ 2)(3\ 4)\rangle \le \Sym(4)$ via $\wt{H} = G$ and
  $\wt{\Delta} = \{3,4\}$.

  Observe that the relation ``permutationally isomorphic to a subgroup''
  on permutation groups is transitive.  We say that the terms of a
  sequence $\mS = (S_k)_{k\in \mathbb{N}}$ of finite permutation
  groups $S_k \le \Sym(\Omega_k)$ are \emph{eventually permutationally
    isomorphic to subgroups of later terms}, if there exists
  $n_0 \in \mathbb{N}$ such that, for every
  $j \in \mathbb{N}_{\ge n_0}$, there is at least one (equivalently:
  there are infinitely many) $k \in \mathbb{N}_{>j}$ for which
  $S_j \le \Sym(\Omega_j)$ is permutationally isomorphic to a subgroup of
  $S_k \le \Sym(\Omega_k)$.
\end{defn*}

We establish the following results.  

\begin{thmABC}\label{thm:C}
  Let $\mS = (S_k)_{k\in \mathbb{N}}$ be a sequence of non-trivial
  finite permutation groups $S_k \le \Sym(\Omega_k)$.  If the terms of
  $\mS$ are eventually permutationally isomorphic to subgroups of later
  terms, then the infinitely iterated wreath product
  $W^\mathrm{pa}(\mS)$ of type $\mS$ is non-co-Hopfian.
\end{thmABC}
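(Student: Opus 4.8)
The plan is to exhibit an explicit proper closed self-embedding of $W^{\mathrm{pa}}(\mS)$. The key structural fact about infinitely iterated wreath products w.r.t. product actions is that they "shift": if we drop the top finitely many terms, the tail wreath product is itself isomorphic to a truncated version of the construction. More precisely, $W^{\mathrm{pa}}(\mS) = \varprojlim_n W^{\mathrm{pa}}_n$ where $W^{\mathrm{pa}}_n = S_n \cwr (S_{n-1} \cwr \cdots \cwr S_0)$, and there are natural surjections $W^{\mathrm{pa}}_n \twoheadrightarrow W^{\mathrm{pa}}_{n-1}$ whose kernels are the "top-level" base-group coordinates. The co-Hopfian question thus becomes: can we embed the whole tower as a proper closed subgroup of itself?

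**First I would** pin down the right self-similarity. The hypothesis that the terms are eventually permutationally isomorphic to subgroups of later terms should let me build a strictly increasing sequence of indices $n_0 < k_{n_0} < k_{n_0+1} < \cdots$ such that $S_j \le \Sym(\Omega_j)$ is permutationally isomorphic to a subgroup of $S_{k_j} \le \Sym(\Omega_{k_j})$ for each $j \ge n_0$. The strategy is then to embed $W^{\mathrm{pa}}(\mS)$ into itself by a "stretching" map that sends the $j$-th layer of the tower into the $k_j$-th layer, using at each level the permutational embedding $S_j \hookrightarrow S_{k_j}$ promised by the definition. Crucially, a permutational isomorphism onto an invariant subset is exactly the data needed to induce a closed embedding of wreath products layer by layer, because the product action on $\wh\Omega_n$ is assembled functorially from the actions on the individual $\Omega_k$. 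To organise this, I would prove a lemma (or invoke the permutational-embedding machinery already flagged via Definition~\ref{defn:Pembedded} and Proposition~\ref{prop:P-embed}) showing that a permutational embedding $S_j \hookrightarrow S_{k_j}$ at each layer functorially yields a closed embedding of the finite truncated wreath products, compatible with the inverse system, hence a closed embedding $\Phi$ of the inverse limits.

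**The point then is to** verify that the resulting self-embedding $\Phi \colon W^{\mathrm{pa}}(\mS) \hookrightarrow W^{\mathrm{pa}}(\mS)$ is \emph{proper}. Because the indices strictly increase, the image of $\Phi$ sits inside the subgroup determined by "having trivial content in the skipped layers" together with "lying in the proper invariant subsets $\wt\Omega_{k_j} \subsetneq \Omega_{k_j}$ whenever the permutational embedding is not onto." At least one such proper inclusion must occur infinitely often—either because some $S_{k_j}$ acts on a strictly larger set than the image of $S_j$, or because layers are skipped and the base group at a skipped layer is non-trivial (here the hypothesis that all $S_k$ are \emph{non-trivial} permutation groups is essential, guaranteeing a non-trivial coordinate that the image misses). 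This witnesses $\mathrm{im}(\Phi) \lneqq_{\mathrm c} W^{\mathrm{pa}}(\mS)$, so the group is non-co-Hopfian.

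**The hard part will be** making the layer-by-layer induced embedding genuinely compatible with the inverse-limit structure and checking properness cleanly. The subtlety is that a product-action wreath product is not merely a set-theoretic product of its layers: the top group permutes the copies of the base group, and the product action on $\wh\Omega_n$ is built by a nested exponentiation $\wh\Omega_n = \Omega_n^{(\wh\Omega_{n-1})}$. A permutational embedding at one layer must be shown to transport correctly through this exponentiation to give an $\wt H$-invariant subset of $\wh\Omega_{k_j}$ of the right shape, and the choice of index sequence must be coherent across all layers simultaneously so that the finite-level embeddings commute with the projection maps of the inverse system. I expect the cleanest route is to reduce everything to the permutational-embedding formalism of Definition~\ref{defn:Pembedded} and Proposition~\ref{prop:P-embed}, which presumably already packages the functoriality of product actions under permutational embeddings, so that the proof reduces to (i) extracting the index sequence from the hypothesis and (ii) the properness count above.
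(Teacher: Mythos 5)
Your proposal follows essentially the same route as the paper: extract a strictly increasing index function $m$ with $m(j)>j$ from the hypothesis, build the self-embedding layer by layer from the permutational embeddings $S_j\hookrightarrow S_{m(j)}$ together with a device for skipping the intermediate factors, and obtain properness from the non-trivial base groups at the skipped layers. The only adjustment is that the relevant machinery is Lemma~\ref{lem:cwrembedding} and Lemma~\ref{lem:cwrforgettingmiddle} (primitive-into-primitive embeddings, combined as in Proposition~\ref{prop:subseq-constr}) rather than Proposition~\ref{prop:P-embed}, which serves the imprimitive-into-primitive embedding used for Theorem~\ref{thm:A}.
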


Theorem~\ref{thm:C} applies, in particular, to constant sequences of
finite simple groups, but also to the sequence
$\mS = (S_k)_{k \in \mathbb{N}}$ of pairwise non-isomorphic
alternating groups $S_k = \mathrm{Alt}(k+4) \le \Sym(k+4)$.

For our final result, recall that a finite group $S$ is
\emph{minimal non-abelian simple} if it is non-abelian simple and
every proper subgroup of $S$ is soluble; such groups were classified
by J.~G.~Thompson~\cite{thompson:a,thompson:d} well before the
classification of all finite simple groups.

\begin{corABC} \label{cor:D} Let $\mS = (S_k)_{k\in \mathbb{N}}$ be a
  sequence of finite transitive permutation groups
  $S_k\le \Sym(\Omega_k)$ that are minimal non-abelian simple. Then
  the infinitely iterated wreath product $W^\mathrm{pa}(\mS)$ is
  non-co-Hopfian if and only if the terms of $\mS$ are eventually
  permutationally isomorphic to subgroups of later terms.
\end{corABC}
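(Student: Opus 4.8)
The backward implication is precisely Theorem~\ref{thm:C}, which holds for arbitrary non-trivial finite transitive permutation groups and therefore applies verbatim in the present setting. So the plan is to establish the forward implication in contrapositive form: assuming that the terms of $\mS$ are \emph{not} eventually permutationally isomorphic to subgroups of later terms, I would show that $W = W^\mathrm{pa}(\mS)$ is co-Hopfian, i.e.\ that every continuous embedding $\phi \colon W \to W$ is surjective. To organise the argument I would work with the standard open normal subgroups $K_n = \ker(W \twoheadrightarrow W^\mathrm{pa}_n)$, whose successive layers $K_{n-1}/K_n$ are isomorphic (for $n \ge 1$) to the direct power $S_n^{\wh{\Omega}_{n-1}}$, on which the quotient $W^\mathrm{pa}_{n-1}$ acts by the product permutation action, with $W/K_0 \cong S_0$ at the base.

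The first ingredient is a rigidity statement drawn from the hypothesis that every $S_k$ is minimal non-abelian simple, so that all proper subgroups of $S_k$ are soluble. I would deduce that, for any closed subgroup $U \le_\mathrm{c} W$, a non-abelian simple group $T$ can occur as a composition factor of $U$ only inside those layers $K_{m-1}/K_m$ with $S_m \cong T$: the image of $U \cap K_{m-1}$ in $K_{m-1}/K_m \cong S_m^{\wh{\Omega}_{m-1}}$ is a subgroup of a direct power of $S_m$, whose only non-abelian composition factors are copies of $S_m$, while a proper subgroup of a single factor contributes none. Applying this to $U = H := \phi(W) \cong W$ and comparing multiplicities layer by layer, the isomorphism type of $H$ forces $H$ to exhaust, at each level $m$, the full complement of $S_m$-composition factors available in $K_{m-1}/K_m$.

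The decisive second ingredient is to feed the product-action structure into this bookkeeping. Tracking a composition factor $S_j$ of the domain through $\phi$, I would argue that whenever the intrinsic level-$j$ layer of $H \cong W$ is realised inside the extrinsic layer $K_{m-1}/K_m$ of the ambient $W$, an arbitrary identification is obstructed: the transitive group $S_j \le \Sym(\Omega_j)$ must be permutationally isomorphic to a subgroup of $S_m \le \Sym(\Omega_m)$ in the sense of Definition~\ref{defn:Pembedded}, and an inductive, top-down analysis of the layers should moreover yield the direction constraint $m \ge j$. At this point one invokes the negation of the hypothesis, which supplies cofinally many indices $j$ for which $S_j$ is permutationally isomorphic to a subgroup of no later term $S_k$ with $k > j$; for each such $j$ the only admissible target is $m = j$, so $\phi$ maps the level-$j$ layer onto the level-$j$ layer. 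As these indices are cofinal in $\mathbb{N}$, the image $\phi(W)$ then surjects onto $W^\mathrm{pa}_n$ for arbitrarily large $n$, hence is dense, and being closed it equals $W$.

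The main obstacle I anticipate is exactly the step converting the \emph{abstract} placement of a composition factor $S_j$ of $H$ inside an extrinsic layer $K_{m-1}/K_m$ into an honest \emph{permutational} isomorphism of $S_j$ onto a subgroup of $S_m$, together with the direction constraint $m \ge j$. This is where the embedding theory of product-action wreath products in Proposition~\ref{prop:P-embed} and Corollary~\ref{cor:finite-iterated-wrp} should be brought to bear, to show that a self-embedding of $W$ is governed precisely by permutational embeddings of its individual terms. Controlling the interaction between the intrinsic filtration of $H$ transported from $W$ via $\phi$ and the extrinsic filtration $\{H \cap K_n\}$ inherited from the ambient group is the technical heart of the argument, and it is only at the cofinally many \emph{bad} indices that this interaction becomes rigid enough to force surjectivity.
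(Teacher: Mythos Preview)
Your reduction of one direction to Theorem~\ref{thm:C} and the composition-factor restriction in your first ingredient are correct and agree with the paper. The difficulty lies in your second ingredient. You propose to extract, from an \emph{arbitrary} closed embedding $\phi\colon W\to W$, a permutational isomorphism of $S_j$ onto a subgroup of $S_m$; you rightly flag this as the main obstacle, and it is a genuine one. Proposition~\ref{prop:P-embed} and Corollary~\ref{cor:finite-iterated-wrp} go the wrong way for this purpose: they \emph{construct} embeddings from permutational data, they do not decompose a given embedding into such data, and there is no reason an arbitrary self-embedding of $W$ should respect the product-action structure in the way you require.

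The paper avoids this obstacle entirely by two observations you are missing. First, for minimal non-abelian simple groups the permutational hypothesis collapses to an abstract one: if $S_j$ is permutationally isomorphic to a subgroup $\wt H\le S_k$, then $\wt H$, being non-soluble, must equal $S_k$, and by transitivity the invariant subset $\wt\Delta$ must be all of $\Omega_k$; hence ``permutationally isomorphic to a subgroup of'' just means ``permutationally equivalent to'', and the negated hypothesis says that infinitely many isomorphism types occur only finitely often in~$\mS$. Second, because each $S_k$ is non-abelian simple and transitive, the open normal subgroups of $G=W^{\mathrm{pa}}(\mS)$ form a single descending chain $G=N_0\triangleright N_1\triangleright\cdots$ (your $K_n$); transporting this via $H\cong G$ gives a chain $H=M_0\triangleright M_1\triangleright\cdots$, and $H\cap N_n$ is forced to equal some $M_{m(n)}$ with $m(n)\le n$, since $\lvert H:M_{m(n)}\rvert=\lvert HN_n:N_n\rvert\le\lvert G:N_n\rvert$. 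Now take $n$ to be the \emph{last} index at which a finitely-occurring type $S_k$ appears. If $m(n)<n$ then $M_{n-1}\le M_{m(n)}=H\cap N_n\le N_n$; but $M_{n-1}$ has $S_n\cong S_k$ as a composition factor, whereas by your own first ingredient (applied to the subgroup $M_{n-1}$ of $N_n$) no subgroup of $N_n$ does. Thus $m(n)=n$, hence $HN_n=G$, for cofinally many~$n$, and $H=G$. No permutational bookkeeping is needed, and your layer-by-layer multiplicity claim (``exhaust at each level~$m$'') is neither required nor correct as stated when the same isomorphism type recurs at several levels.
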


Observe that, if $\mS$ consists of minimal non-abelian simple groups,
then the terms of $\mS$ are eventually permutationally isomorphic to
subgroups of later terms if and only if almost all terms occur
infinitely often in~$\mS$.

To build an explicit example, we recall that the minimal finite
non-abelian simple groups are: $\mathrm{PSL}_2(2^p)$ for any prime
$p$, $\mathrm{PSL}_2(3^p)$ for any odd prime $p$, $\mathrm{PSL}_2(p)$
where $p>3$ and $5$ divides $p^2+1$, $\mathrm{Sz}(2^p)$ for any odd
prime $p$ and $\mathrm{PSL}_3(3)$.  Let $(p_k)_ {k \in \mathbb{N}}$ be
any sequence of prime numbers without repeated terms. Then
Corollary~\ref{cor:D} yields that the infinitely iterated wreath
product w.r.t. product actions of type
$\mS = (S_k)_{k \in \mathbb{N}}$, with
$S_k = \mathrm{PSL}_2(2^{p_k})\le \Sym(2^{p_k}+1)$ acting on the
projective line over $\mathbb{F}_{2^{p_k}}$, is co-Hopfian.


\section{Preliminaries} \label{sec:prelim}

In this section we collect some definitions that provide a more
general context for our main theorems and serve as ingredients for the
proofs.

\subsection{Iterated wreath products} First we elaborate on the
concept of an infinitely iterated wreath product.  Let
$\mS = (S_k)_{k\in \mathbb{N}}$ be a sequence of finite groups.  One
can define, in many ways, a new sequence of permutation groups
$\wh{\mS} = ( \wh{S}_k)_{k\in \mathbb{N}}$, with
$\wh{S}_k\le \Sym(\wh{m}_k)$, recursively as follows: (i) set
$\wh{S}_1 = S_1$ and choose a transitive faithful action of $\wh{S}_1$
on a finite set~$\wh{\Omega}_1$; (ii) for $k \ge 2$, let $\wh{S}_k$ be
the wreath product of $S_k$ by $\wh{S}_{k-1}$ w.r.t.\ the given
transitive faithful action of $\wh{S}_{k-1}$ and choose a transitive
faithful action of $\wh{S}_k$ on a finite set~$\wh{\Omega}_k$.  For
each $k \in \mathbb{N}$, the group $\wh{S}_k$ is called a
\emph{$k$-fold iterated wreath product of type $(S_1,\ldots,S_k)$}.
The resulting sequence $\wh{\mS}$ constitutes in a natural way an
inverse system of finite groups; we call its inverse limit
$\varprojlim \wh{S}_k$ an \emph{infinitely iterated wreath product of
  type $\mS$}.

The infinitely iterated wreath products w.r.t.\ product actions,
discussed in the introduction, fall into this scheme.  We are, in
fact, interested also in finitely iterated wreath products w.r.t.\
product actions and in iterated wreath products w.r.t.\ imprimitive
actions.  We employ the symbols $\cwr$ and $\wr$ to distinguish
between wreath products w.r.t.\ product actions and imprimitive
actions.  Using notation that is chosen to fit our later applications
(e.g., compare Proposition~\ref{prop:emb-wr-in-wr}), we describe the
two constructions as follows.

\begin{defn} \label{defn:two-constr} Let
  $\mS = (S_k)_{k\in \mathbb{N} \cup \{0\}}$ be a sequence of finite
  permutation groups $S_k \le \Sym(\Omega_k)$, and set
  $\mS' = (S_k)_{k\in \mathbb{N}}$.

  \smallskip

  (1) Define inductively $\wh{\Omega}_1 = \Omega_1$ and
  $\wh{\Omega}_n = \Omega_n \times \wh{\Omega}_{n-1}$ for $n \ge 2$.
  The \emph{$n$th iterated wreath product
    $W^\mathrm{ia}_n \le \Sym(\wh{\Omega}_n)$ of type
    $\mS'_n = (S_1,\ldots,S_n)$ w.r.t.\ imprimitive actions} is given
  by
  \begin{align*}
    W^\mathrm{ia}_1 & =  W^\mathrm{ia} (\mS'_1) = S_1\le \Sym(\wh{\Omega}_1), \\
    W^\mathrm{ia}_n & =  W^\mathrm{ia}(\mS'_n) =  S_n \wr W^\mathrm{ia}_{n-1} \le
                      \Sym(\wh{\Omega}_n) \qquad \text{for $n \ge 2$.}
  \end{align*}
  The explicit realisation of the wreath product as a semidirect
  product is recalled in the proof of Proposition~\ref{prop:P-embed}.
  The \emph{infinitely iterated wreath product of type $\mS'$ w.r.t.\
    imprimitive actions} is the inverse limit
  $W^\mathrm{ia}(\mS') = \varprojlim W^\mathrm{ia}_n$ of the natural
  inverse system
  $W^\mathrm{ia}_1 \twoheadleftarrow W^\mathrm{ia}_2 \twoheadleftarrow
  \ldots$.

  \smallskip

  (2) Define inductively $\wh{\Omega}_0 = \Omega_0$ and
  $\wh{\Omega}_n = \Omega_n^{\, \wh{\Omega}_{n-1}}$ for $n \ge 1$.
  The \emph{$n$th iterated wreath product
    $W^\mathrm{pa}_n \le \Sym(\wh{\Omega}_n)$ of type
    $\mS_n = (S_0, \ldots, S_{n-1})$ w.r.t.\ product actions} is given by
  \begin{align*}
    W^\mathrm{pa}_1 & = W^\mathrm{pa}(\mS_1) = S_0 \le \Sym(\wh{\Omega}_0), \\
    W^\mathrm{pa}_n & = W^\mathrm{pa} (\mS_n) = S_{n-1} \,\cwr\,
                      W^\mathrm{pa}_{n-1} \le  \Sym(\wh{\Omega}_{n-1})
                      \qquad \text{for $n \ge 2$}.  
  \end{align*}
  The explicit realisation of the wreath product as a semidirect
  product is recalled in the proof of Proposition~\ref{prop:P-embed}.
  The \emph{infinitely iterated wreath product of type $\mS$ w.r.t.\
    product actions} is the inverse limit
  $W^\mathrm{pa}(\mS) = \varprojlim W^\mathrm{pa}_n$ of the natural
  inverse system
  $W^\mathrm{pa}_1 \twoheadleftarrow W^\mathrm{pa}_2 \twoheadleftarrow
  \ldots$.
\end{defn}

\subsection{Composition series}
Let $G$ be a countably based profinite group.  Recall that every
descending sequence
$G = G_1 \prescript{}{\mathrm{o}}\ge\, G_2
\prescript{}{\mathrm{o}}\ge\, \ldots$
of open subgroups with $\bigcap_n G_n = 1$ forms a neighbourhood basis
of the identity element; see
\cite[Lemma~0.3.1(h)]{wilson:profinitegroups}.  A \emph{composition
  series} $(G_n)_{n\in \mathbb{N}}$ for $G$ consists of open subnormal
subgroups $G = G_1 \triangleright G_2 \triangleright \ldots$ with
$\bigcap_n G_n = 1$ and finite simple \emph{composition factors}
$S_n = G_n/G_{n+1}$ for $n \in \mathbb{N}$; we refer to
$\mS = (S_n)_{n\in \mathbb{N}}$ as a \emph{sequence of composition
  factors} for $G$.  The Jordan--H\"{o}lder Theorem for finite groups
implies that any two composition series of $G$ are equivalent in the
sense that the composition factors (up to isomorphism) occur with the
same multiplicities in both series.


\section{Embedding theorems}\label{sec:embedding}

In this section we establish the following basic fact which leads
directly to a proof of Theorem~\ref{thm:A}.

\begin{prop} \label{prop:emb-wr-in-wr}
  Let $\mS = (S_k)_{k \in \mathbb{N} \cup \{0\}}$ be a sequence of
  finite simple groups, where each $S_k \le \Sym(S_k)$ forms a
  permutation group via the right regular action.
  Then the infinitely iterated wreath product $W^\mathrm{ia}(\mS')$ of
  type $\mS' = ( S_k )_{k \in \mathbb{N}}$ w.r.t.\ imprimitive actions
  embeds as a closed subgroup into the infinitely iterated wreath
  product $W^\mathrm{pa}(\mS)$ of type
  $\mS = ( S_k )_{k \in \mathbb{N} \cup \{ 0\}}$ w.r.t.\ product
  actions.
\end{prop}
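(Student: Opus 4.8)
The plan is to reduce the statement to a compatible family of embeddings between the finite layers of the two inverse systems and then pass to the inverse limit. Writing $V_n = W^{\mathrm{ia}}_n(\mS')$ for the imprimitive layers (built from $S_1,\ldots,S_n$, acting on $\wh{\Omega}_n = \Omega_n \times \cdots \times \Omega_1$) and $U_n = W^{\mathrm{pa}}_n(\mS)$ for the product-action layers (built from $S_0,\ldots,S_{n-1}$), I would construct continuous injective homomorphisms $\phi_n \colon V_n \hookrightarrow U_{n+1}$ that commute with the canonical surjections $V_{n+1} \twoheadrightarrow V_n$ and $U_{n+2} \twoheadrightarrow U_{n+1}$ of the two inverse systems. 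Note the index shift: the extra bottom term $S_0$ of the product-action tower is precisely what provides the room to accommodate $V_n$ inside $U_{n+1}$.

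The heart of the argument is a single-step construction, which is exactly the content of Proposition~\ref{prop:P-embed} and its Corollary~\ref{cor:finite-iterated-wrp}: if $H \le \Sym(\Gamma)$ is permutationally isomorphic to a subgroup of $U \le \Sym(\wh{\Omega})$ and $S$ acts regularly on itself, then the imprimitive wreath product $S \wr H \le \Sym(S \times \Gamma)$ embeds into the product-action wreath product $S \cwr U \le \Sym(S^{\wh{\Omega}})$, with the image again permutationally isomorphic to a subgroup. I would establish the base case concretely via the diagonal: inside $U_2 = S_1 \cwr S_0$ the diagonal copy of $S_1$ in the base group $S_1^{S_0}$ acts on $S_1^{S_0}$ by simultaneous right multiplication, so every orbit $\{\, f_0 \cdot s : s \in S_1 \,\}$ is regular. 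Such an orbit is an invariant subset equivalent, as an $S_1$-set, to the regular action $\Omega_1 = S_1$; hence $S_1 \le \Sym(\Omega_1)$ is permutationally isomorphic to a subgroup of $U_2 \le \Sym(S_1^{S_0})$ in the precise sense of the Definition above.

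From there I would induct. Assuming $V_n$ is permutationally isomorphic to a subgroup of $U_{n+1}$, the single-step construction applied with $S = S_{n+1}$ yields an embedding $V_{n+1} = S_{n+1} \wr V_n \hookrightarrow S_{n+1} \cwr U_{n+1} = U_{n+2}$ whose image is once more permutationally isomorphic to a subgroup, so that the inductive hypothesis propagates and the family $(\phi_n)$ is produced. The equivariant bijections supplied at each stage can be chosen coherently so that the squares involving the two systems of projections commute; this is the only place where genuine care is needed.

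Finally I would pass to the limit: the compatible family $(\phi_n)$ induces a continuous homomorphism $\varprojlim V_n \to \varprojlim U_{n+1}$, that is $W^{\mathrm{ia}}(\mS') \to W^{\mathrm{pa}}(\mS)$ after reindexing, and it is injective because each $\phi_n$ is. Since $W^{\mathrm{ia}}(\mS')$ is compact and $W^{\mathrm{pa}}(\mS)$ is Hausdorff, this continuous injection is a homeomorphism onto its image and the image is closed, giving the desired closed embedding. I expect the main obstacle to be the bookkeeping in the single-step construction, namely selecting the invariant subsets and equivariant bijections so that the finite embeddings are simultaneously injective, respect the imprimitive-versus-product wreath structure, and commute with both inverse systems; the passage to the limit and the verification of closedness are then routine.
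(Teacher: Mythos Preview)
Your overall strategy matches the paper's exactly: build compatible embeddings of the finite layers via the single-step construction of Proposition~\ref{prop:P-embed} and Corollary~\ref{cor:finite-iterated-wrp}, then pass to the inverse limit using compactness.  However, there is a genuine gap in the inductive invariant you carry.  You phrase the single-step construction as: ``if $H$ is \emph{permutationally isomorphic to a subgroup} of $U$, then $S\wr H$ is permutationally isomorphic to a subgroup of $S\,\cwr\,U$''.  In the paper's language this is a P-embedding of degree~$1$, but Proposition~\ref{prop:P-embed} explicitly requires a P-embedding of degree $r\ge 2$, and its output is again of degree $\wh r\ge 2$.  The condition $r\ge 2$ is not cosmetic: the sets $\wh\Gamma(\omega,\varepsilon)$ are defined using the clause ``$f$ is \emph{not constant} on $\Gamma(\delta)$ for $\delta\ne\varepsilon$'', which is vacuous when $\lvert\Gamma(\delta)\rvert=1$.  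In fact the degree-$1$ version of the inductive step is false in general: take $H=G=S_3$ on three points (so the degree-$1$ P-embedding is the identity) and $S=C_2$ regular; then $S\wr H\cong S\,\cwr\,G$ has order $48$, the product action on $\{0,1\}^3$ is transitive on $8$ points, and hence there is no $6$-element invariant subset realising the imprimitive action of $S\wr H$.

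Your base case therefore does not feed into Proposition~\ref{prop:P-embed}.  The repair is minimal: instead of selecting a \emph{single} regular orbit of the diagonal $S_1\le S_1^{\,S_0}$, choose at least two of them (there are $\lvert S_1\rvert^{\lvert S_0\rvert-1}\ge 2$ such orbits, since $S_0$ is non-trivial) and pair them up via equivariant bijections to obtain a P-embedding of $S_1\le\Sym(S_1)$ into $U_2$ of some degree $r\ge 2$.  With this corrected base case Proposition~\ref{prop:P-embed} applies at every stage, producing P-embeddings of degree $\ge 2$ throughout, and the rest of your argument (compatibility of the maps and passage to the limit) goes through as written.
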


\begin{proof}[Proof of Theorem~\ref{thm:A}]
  Set $\mS' = (S_n )_{n\in \mathbb{N}}$, and let $G$ be a countably
  based profinite group that admits $\mS'$ as a sequence of
  composition factors.  Fix a composition series
  $G = G_1 \triangleright G_2 \triangleright \ldots$ with
  $S_n \cong G_n/G_{n+1}$ for $n \in \mathbb{N}$.
  For each $n \in \mathbb{N}$, choose representatives
  $T_n = \{t^{(n)}_{s_n} \in G_n \mid s_n \in S_n\}$ for the cosets of
  $G_{n+1}$ in $G_n$ and, for $g\in G_n$, denote by $[g]_n \in T_n$
  the representative of $g$ modulo~$G_{n+1}$.  The set
  \[
  \bigcup_{N \in \mathbb{N}_0} (T_N \times \cdots \times T_2 \times
  T_1),
  \] 
  of finite words in the `alphabet' $(T_n)_{n \in \mathbb{N}}$, forms
  a rooted spherically homogeneous tree $\mathcal{T}$ with respect to
  the prefix partial order, whose layers are in natural correspondence
  with the finite coset spaces $G_n \backslash G$.  As
  $\bigcap_n G_n = 1$, the group $G$ acts faithfully on the boundary
  $\partial \mathcal{T}$, and hence on $\mathcal{T}$, via right
  multiplication: for
  $(t^{(n)}_{s_n})_{n\in \mathbb{N}} \in \partial \mathcal{T}$ and
  $g\in G$ the element
  $(u_n)_{n\in \mathbb{N}} = ((t^{(n)}_{s_n})_{n\in \mathbb{N}})^g
  \in \partial \mathcal{T}$ is given recursively by
  \[
  g_1 = g, \quad u_n = [ t^{(n)}_{s_n} g_n]_n \text{ for $n \ge 1$,}
  \quad g_n = g_{n-1} u_{n-1}^{-1} \text{ for $n \ge 2$};
  \]
  compare \cite[proof of Theorem~2.6A]{dixon:permutationgroups}.

  This yields a continuous, hence closed embedding of the compact
  group $G$ into the profinite group $\mathrm{Aut}(\mathcal{T})$.
  Furthermore, by construction the image of $G$ lies in a subgroup
  $W \le \mathrm{Aut}(\mathcal{T})$ that is naturally isomorphic to
  $W^\mathrm{ia}(\mS')$.  Now, Proposition~\ref{prop:emb-wr-in-wr}
  shows that $W^\mathrm{ia}(\mS')$ and hence also $G$ embed as closed
  subgroups into $W^\mathrm{pa}(\mS)$.
\end{proof}

The proof of Proposition~\ref{prop:emb-wr-in-wr} relies on a
construction regarding finite wreath products.  For any set $X$ let
$\mathcal{P}(X) = \{ Y \mid Y \subseteq X \}$ denote the power set
of~$X$, and, for any given cardinal~$r$, we write
$\mathcal{P}_r(X) = \{ Y \in \mP(X) \mid \norm{Y}=r \}$.  A
permutation group $G \le \Sym(\Sigma)$ has a natural induced permutation
action on each $\mathcal{P}_r(\Sigma)$, via
$\Gamma^g = \{ \gamma^g \mid \gamma \in \Gamma \}$ for
$\Gamma \subseteq \Sigma$ and $g\in G$.

\begin{defn}\label{defn:Pembedded}
  Let $H\le \Sym(\Delta)$ and $G\le \Sym(\Sigma)$ be permutation groups.
  Consider the induced action of $G$ on $\mathcal{P}_r(\Sigma)$ for
  some cardinal~$r$.  We say that $H$ is \emph{P-embedded of degree
    $r$} in $G$ if there exist
  \begin{enumerate}
  \item[$\circ$] a collection
    $\wt{\Delta} \subseteq \mathcal{P}_r(\Sigma)$ of pairwise disjoint
    sets and
  \item[$\circ$] a subgroup $\wt{H} \le G$ such that $\wt{\Delta}$ is
    $\wt{H}$-invariant and the action of $\wt{H}$ on $\wt{\Delta}$ is
    equivalent to the action of $H$ on $\Delta$.
  \end{enumerate}
  In other words, $H\le \Sym(\Delta)$ is P-embedded of degree $r$ in
  $G \le \Sym(\Sigma)$ if there exist an isomorphism $\iota \colon H \to
  \wt{H} \le G$ and a bijection $\Gamma \colon \Delta \to \wt{\Delta}
  \subseteq \mP_r(\Sigma)$ such that
  \[
  \Gamma(\delta^h) = \Gamma(\delta)^{\iota(h)} \qquad \text{for $\delta
    \in \Delta$ and $h \in H$.}
  \]
  We remark that $H\le \Sym(\Delta)$ is P-embedded of degree $1$ in
  $G \le \Sym(\Sigma)$ if and only $H\le \Sym(\Delta)$ is
  \emph{permutationally isomorphic to a subgroup} of
  $G\le \Sym(\Sigma)$ as described in the introduction.
\end{defn}

\begin{prop} \label{prop:P-embed}
  Let $H \le \Sym(\Delta)$, $G \le \Sym(\Sigma)$ and $S \le \Sym(\Omega)$ be
  non-trivial finite permutation groups.  Suppose that 
  \begin{equation} \label{equ:iota-gamma} \iota \colon H \to \wt{H}
    \le G \qquad \text{and} \qquad \Gamma \colon \Delta \to
    \wt{\Delta} \subseteq \mP_r(\Sigma)
  \end{equation}
  provide a P-embedding of degree $r \ge 2$.
  
  Then the imprimitive wreath product
  $V = S \wr H \le \Sym(\Omega \times \Delta)$ can be P-embedded into
  the primitive wreath product
  $W = S \,\cwr\, G \le \Sym(\Omega^\Sigma)$.  More specifically,
  writing $\Phi = \Omega \times \Delta$, there are an integer
  $\wh{r}\ge 2$ and a P-embedding of degree $\wh{r}$ via
  \[
  \wh{\iota} \colon V \to \wt{V} \le W \qquad \text{and}
  \qquad \wh{\Gamma} \colon \Phi \to \wt{\Phi} \subseteq
  \mP_{\wh{r}}(\Omega^\Sigma),
  \]
  such that $\wh{\iota}$ induces, upon factoring out the base groups
  on both sides, the original isomorphism~$\iota$.
\end{prop}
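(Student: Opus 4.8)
The plan is to give an explicit recipe for the P-embedding of the imprimitive wreath product $V = S \wr H$ into the product-action wreath product $W = S \cwr G$, then verify it is an isomorphism onto its image that respects the prescribed structure. Let me set up notation. Write $\Phi = \Omega \times \Delta$ and recall that $V = S \wr H$ has base group $B_V = \prod_{\delta \in \Delta} S$ (one copy of $S$ per point of $\Delta$), with $H$ permuting the coordinates via its action on $\Delta$; the action on $\Phi$ is the imprimitive one, where $(\omega, \delta)^{(f; h)} = (\omega^{f(\delta)}, \delta^h)$. On the other side, $W = S \cwr G$ has base group $B_W = \prod_{\sigma \in \Sigma} S$ (one copy per point of $\Sigma$), $G$ permutes coordinates via its action on $\Sigma$, and $W$ acts on $\Omega^\Sigma$ by the product action: $g \in G$ permutes the $\Sigma$-indexed tuples, and $f = (f_\sigma)_{\sigma} \in B_W$ acts coordinatewise.

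Let me describe $\wh\iota$ and $\wh\Gamma$. The data $\iota \colon H \to \wt H$ and $\Gamma \colon \Delta \to \wt\Delta \subseteq \mP_r(\Sigma)$ identify each point $\delta \in \Delta$ with a pairwise-disjoint $r$-subset $\Gamma(\delta) \subseteq \Sigma$, compatibly with the actions. The key idea is to \emph{diagonally embed} the base group: to a tuple $(s_\delta)_{\delta \in \Delta} \in B_V$ I associate the element of $B_W = \prod_{\sigma} S$ whose $\sigma$-coordinate is $s_\delta$ whenever $\sigma \in \Gamma(\delta)$ (using disjointness to make this well-defined on $\bigcup_\delta \Gamma(\delta)$) and is $1$ (or any fixed choice) on the complement $\Sigma \setminus \bigcup_\delta \Gamma(\delta)$. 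Together with $\iota$ on the top groups, this defines $\wh\iota \colon V \to W$; one checks it is an injective homomorphism using that $\iota$ intertwines the $H$-action on $\Delta$ with the $\wt H$-action on $\{\Gamma(\delta)\}$. By construction, factoring out $B_V$ and $B_W$ recovers $\iota \colon H \to \wt H$, as required by the last clause.

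For the point-set bijection $\wh\Gamma$, the natural choice of degree is $\wh r = r$: I send a point $(\omega, \delta) \in \Phi$ to a suitable $r$-subset of $\Omega^\Sigma$ built from $\Gamma(\delta) = \{\sigma_1, \dots, \sigma_r\} \subseteq \Sigma$ and the label $\omega \in \Omega$. A clean way is to fix a base tuple in $\Omega^\Sigma$ and let $\wh\Gamma(\omega, \delta)$ be the $r$-element orbit-like set obtained by placing $\omega$ in each of the coordinates indexed by $\Gamma(\delta)$ one at a time; more robustly, one chooses $\wh\Gamma(\omega, \delta) \subseteq \Omega^\Sigma$ so that the product action of $\wt V := \wh\iota(V)$ permutes the collection $\wt\Phi = \{\wh\Gamma(\omega,\delta)\}$ exactly as $V$ permutes $\Phi$, and so that distinct points give disjoint subsets. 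I would verify the defining equivariance identity $\wh\Gamma((\omega,\delta)^v) = \wh\Gamma(\omega,\delta)^{\wh\iota(v)}$ separately for $v \in B_V$ (where only the $\Omega$-entry moves, via $s_\delta$ acting in the coordinates $\Gamma(\delta)$) and for $v = (1; h)$ with $h \in H$ (where $\iota(h)$ permutes the blocks $\Gamma(\delta)$), then combine.

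\emph{The main obstacle} is choosing $\wh\Gamma$ and $\wh r$ so that three conditions hold simultaneously: the sets $\wh\Gamma(\omega,\delta)$ are pairwise disjoint subsets of $\Omega^\Sigma$, they are permuted by $\wt V$ in the way that matches $V \curvearrowright \Phi$, and the whole thing is consistent with the diagonal base-group embedding. The tension is that the product action mixes coordinates in a way that can collapse the images of base-group elements acting on different blocks, so the naive "place $\omega$ in the $\Gamma(\delta)$-coordinates" map may fail to be injective or disjoint; resolving this likely forces $\wh r \ge 2$ (hence the statement only claims $\wh r \ge 2$, not $\wh r = r$) and may require padding the base tuple on $\Sigma \setminus \bigcup_\delta \Gamma(\delta)$ with distinct markers to separate the blocks. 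I expect the correct $\wh\Gamma$ to encode each point $(\omega, \delta)$ as a carefully chosen constellation of tuples in $\Omega^\Sigma$ that "reads off" both the active block $\Gamma(\delta)$ and the symbol $\omega$, and the bulk of the work is the combinatorial verification that this constellation is disjoint across points and equivariant under $\wt V$. Everything else — that $\wh\iota$ is a homomorphism, injective, and reduces to $\iota$ modulo base groups — is a routine check once the base-group diagonal embedding is in place.
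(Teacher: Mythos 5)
Your setup coincides with the paper's: the identification of $W = S \,\cwr\, G$ with $A \rtimes G$ for $A = S^\Sigma$, the block-diagonal embedding $\iota'$ of the base group $B = S^\Delta$ into $A$ supported on $\bigcup_{\delta}\Gamma(\delta)$, and the reduction of the problem to finding a suitable bijection $\wh{\Gamma}$. But the one genuinely non-routine ingredient --- the actual definition of $\wh{\Gamma}$ --- is missing. You correctly diagnose that the naive choice ($\wh{r}=r$, perturbing a fixed base tuple in the $\Gamma(\delta)$-coordinates) fails, but you then only write that you ``expect the correct $\wh{\Gamma}$ to encode each point as a carefully chosen constellation,'' which restates the problem rather than solving it. The difficulty you flag is real: equivariance under $\wt{B}=\iota'(B)$ forces the defining condition of $\wh{\Gamma}(\omega,\varepsilon)$ on each block $\Gamma(\delta)$ with $\delta\ne\varepsilon$ to be invariant under the simultaneous diagonal action of an arbitrary $s_\delta\in S$ on the $r$ coordinates of that block, and ``agreeing with a fixed base tuple there'' is not such a condition.

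The paper's resolution is to set
\[
\wh{\Gamma}(\omega,\varepsilon) = \bigl\{ f \in \Omega^\Sigma \mid \text{$f$ is constant equal to $\omega$ on $\Gamma(\varepsilon)$, and non-constant on each $\Gamma(\delta)$ with $\delta \ne \varepsilon$} \bigr\},
\]
with no condition imposed outside $\bigcup_\delta \Gamma(\delta)$, so that
$\wh{r} = \norm{\Omega}^{\norm{\Sigma}-r\norm{\Delta}} \cdot \bigl(\norm{\Omega}^{r}-\norm{\Omega}\bigr)^{\norm{\Delta}-1} \ge 2$.
Constancy and non-constancy on a block are precisely the properties preserved by the diagonal action of $S$ on that block and permuted correctly by $\iota(h)$, which yields the equivariance identities for $\wt{B}$ and $\wt{H}$; pairwise disjointness is immediate, since a function cannot be both constant on $\Gamma(\varepsilon)$ and non-constant on $\Gamma(\varepsilon)$, while for a common $\varepsilon$ the constant value $\omega$ is determined. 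This is also where the hypotheses $r\ge 2$ and $\norm{\Omega}\ge 2$ enter, guaranteeing $\norm{\Omega}^{r}-\norm{\Omega}>0$ so the sets are non-empty. Without this (or an equivalent) construction the argument does not go through; the remainder of your outline is correct and, as you say, routine.
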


\begin{proof}
  We identify $W = S \,\cwr\, G$ with $A \rtimes G$, where
  $A = S^\Sigma$ denotes the base group.  Elements
  $(s_\sigma)_\sigma \in A$ and $g \in G$ operate on $\Omega^\Sigma$
  by
  \[
  f^{(s_\sigma)_\sigma}(\tau) = f(\tau)^{s_\tau} \quad
  \text{and} \quad f^g(\tau) = f(\tau^{\,g^{-1}}) \qquad \text{for
    $f \in \Omega^\Sigma$, $\tau \in \Sigma$.}
  \]
   Similarly we identify
  $V = S \wr H$ with $B \rtimes H$, where $B = S^\Delta$ denotes the
  base group.  Elements $(s_\delta)_\delta \in B$ and $h \in H$
  operate on $\Omega \times \Delta$ by
  \[
  (\omega,\varepsilon)^{(s_\delta)_\delta} =
  (\omega^{s_\varepsilon},\varepsilon) \quad \text{and} \quad
  (\omega,\varepsilon)^h = (\omega,\varepsilon^h) \qquad \text{for
    $(\omega,\varepsilon) \in \Omega \times \Delta$.}
  \]
  The P-embedding~\eqref{equ:iota-gamma} yields a collection
  \[
  \wt{\Delta} = \{ \Gamma(\delta) \mid \delta \in \Delta \} \subseteq
  \mP_r(\Sigma)
  \]
  of pairwise disjoint $r$-element subsets of $\Sigma$ that are (i) in
  bijective correspondence with~$\Delta$ and (ii) being permuted by
  $\wt{H} \le G$ in the same way as the elements of $\Delta$ are being
  permuted by~$H$.
  
  We define $\wt{V} = \wt{B} \rtimes \wt{H} \le W$, where $\wt{B} \le
  A$ denotes the image of $B$ under the isomorphism
  \[
  \iota' \colon B \to \wt{B}, \quad (s_\delta)_{\delta \in \Delta} \mapsto
  (t_\sigma)_{\sigma \in \Sigma}, \quad \text{where }
  t_\sigma =
  \begin{cases}
    s_\delta & \text{if $\sigma \in \Gamma(\delta)$,} \\
    1 & \text{otherwise.}
  \end{cases}
  \]
  A routine verification shows that $\iota$ and $\iota'$ induce
  together an isomorphism $\wh{\iota} \colon V \to \wt{V}$ between
  groups.

  Recall that $\min(\norm{\Omega}, \norm{\Delta}, r) \ge 2$ and that
  we write $\Phi = \Omega \times \Delta$.  For 
  \[
  \wh{r} = \norm{\Omega}^{\norm{\Sigma} -
    r\norm{\Delta}} \cdot \big( \norm{\Omega}^{r} -
  \norm{\Omega} \big)^{\norm{\Delta} -1} \ge 2
  \]
  we obtain a bijection 
  \[
  \wh{\Gamma} \colon \Phi \to \wt{\Phi} = \big\{ \wh{\Gamma}(\varphi) \mid
  \varphi \in \Phi \big\} \subseteq \mP_{\wh{r}}(\Omega^\Sigma)
  \]  
  by setting, for each $\varphi = (\omega,\varepsilon) \in \Phi$,
  \begin{multline*}
    \wh{\Gamma}(\varphi) = \{ f \colon \Sigma \to \Omega \mid \text{$f$ is
      constant and equal to $\omega$ on $\Gamma(\varepsilon)$, but} \\
    \text{$f$ is not constant on any $\Gamma(\delta)$ for
      $\delta \in \Delta$ with $\delta \ne \varepsilon$} \}.
  \end{multline*}
  Moreover,
  $\wh{\Gamma}(\varphi) \cap \wh{\Gamma}(\varphi') = \varnothing$ for
  all $\varphi,\varphi' \in \Phi$ with $\varphi \ne \varphi'$.

  A routine calculation shows that, for
  $\varphi = (\omega,\varepsilon) \in \Phi$,
  \[
  \wh{\Gamma}(\varphi^{(s_\delta)_\delta}) =
  \wh{\Gamma}(\omega^{s_\varepsilon},\varepsilon) =
  \wh{\Gamma}(\omega,\varepsilon)^{\iota'((s_\delta)_\delta)} =
  \wh{\Gamma}(\varphi)^{\wh{\iota}((s_\delta)_\delta)} \quad \text{for
    $(s_\delta)_\delta \in B$}
  \]
  and
  \[
  \wh{\Gamma}(\varphi^h) = \wh{\Gamma}(\omega,\varepsilon^h) =
  \wh{\Gamma}(\omega,\varepsilon)^{\iota(h)} =
  \wh{\Gamma}(\varphi)^{\wh{\iota}(h)} \quad \text{for $h \in H$}.
  \]
  Thus $(\wh{\iota},\wh{\Gamma})$ provides the required P-embedding. 
\end{proof}

We obtain the following corollary which in turn supplies a proof of
Proposition~\ref{prop:emb-wr-in-wr}.

\begin{cor}\label{cor:finite-iterated-wrp}
  Let $\mS = (S_k)_{k\in \mathbb{N} \cup \{0\}}$, with
  $S_k \le \Sym(\Omega_k)$, be a sequence of non-trivial finite
  permutation groups, and set $\mS' = (S_k)_{k\in \mathbb{N}}$.

  \smallskip

  $\mathrm{(1)}$ For every $n \in \mathbb{N}$, the $n$th iterated
  wreath product $W^\mathrm{ia}(\mS'_n)$ of type
  $\mS'_n = (S_1,\ldots,S_n)$ w.r.t.\ imprimitive actions is
  P-embedded in the $(n+1)$th iterated wreath product
  $W^\mathrm{pa}(\mS_{n+1})$ of type $\mS_{n+1} = (S_0,\ldots,S_n)$
  w.r.t.\ product actions.

  \smallskip

  $\mathrm{(2)}$ The P-embeddings can be chosen compatible with one
  another so that they induce an embedding of $W^\mathrm{ia}(\mS')$
  into $W^\mathrm{pa}(\mS)$ as a closed subgroup.
\end{cor}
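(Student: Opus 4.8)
The plan is to prove part~(1) by induction on $n$, feeding the output of Proposition~\ref{prop:P-embed} back into its hypothesis, and then to derive part~(2) by assembling the resulting maps into a morphism of inverse systems.

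For the inductive step I would argue as follows. Suppose, as inductive hypothesis, that $W^\mathrm{ia}(\mS'_n)$ is P-embedded of some degree $r \ge 2$ in $W^\mathrm{pa}(\mS_{n+1})$, via maps $\iota$ and $\Gamma$. Taking $S = S_{n+1}$, $H = W^\mathrm{ia}(\mS'_n)$ and $G = W^\mathrm{pa}(\mS_{n+1})$ in Proposition~\ref{prop:P-embed}, and observing that $S \wr H = W^\mathrm{ia}(\mS'_{n+1})$ while $S \cwr G = W^\mathrm{pa}(\mS_{n+2})$, the proposition produces a P-embedding of degree $\wh{r} \ge 2$ of $W^\mathrm{ia}(\mS'_{n+1})$ into $W^\mathrm{pa}(\mS_{n+2})$. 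The essential point is that the output degree is again at least $2$, so the hypothesis is reproduced and the induction sustains itself. The only thing not handed to me by the proposition is the base case $n = 1$, where there is as yet no wreath structure on the imprimitive side to exploit; this I construct explicitly. Identifying $W^\mathrm{pa}(\mS_2) = S_1 \cwr S_0$ with $S_1^{\Omega_0} \rtimes S_0$ acting on $\Omega_1^{\Omega_0}$, I embed $S_1$ diagonally into the base group via $\iota \colon s \mapsto (s)_{\sigma \in \Omega_0}$, fix a point $\sigma_0 \in \Omega_0$, and set $\Gamma(\omega) = \{ f \in \Omega_1^{\Omega_0} \mid f(\sigma_0) = \omega \}$ for $\omega \in \Omega_1$. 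These sets are pairwise disjoint of common size $r = \norm{\Omega_1}^{\norm{\Omega_0}-1} \ge 2$ and satisfy $\Gamma(\omega^s) = \Gamma(\omega)^{\iota(s)}$, so $(\iota,\Gamma)$ is a P-embedding of degree $r \ge 2$ and part~(1) follows.

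For part~(2) I would show that the P-embeddings $\wh{\iota}_n \colon W^\mathrm{ia}(\mS'_n) \to W^\mathrm{pa}(\mS_{n+1})$ constructed above constitute a morphism of inverse systems, once level $n$ on the imprimitive side is aligned with level $n+1$ on the product side. Concretely I must verify that each square formed by $\wh{\iota}_{n+1}$, $\wh{\iota}_n$ and the two transition maps (which are the base-group quotients) commutes. This is exactly the content of the final clause of Proposition~\ref{prop:P-embed}: the isomorphism $\wh{\iota}_{n+1}$ induces, upon factoring out the base groups on both sides, the map $\iota = \wh{\iota}_n$ used as input at that step. Since factoring out the base groups is precisely applying the transition maps $W^\mathrm{ia}(\mS'_{n+1}) \twoheadrightarrow W^\mathrm{ia}(\mS'_n)$ and $W^\mathrm{pa}(\mS_{n+2}) \twoheadrightarrow W^\mathrm{pa}(\mS_{n+1})$, the squares commute. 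Passing to inverse limits, and noting that the shifted system $(W^\mathrm{pa}(\mS_{n+1}))_n$ is cofinal with limit $W^\mathrm{pa}(\mS)$, yields a continuous homomorphism $W^\mathrm{ia}(\mS') \to W^\mathrm{pa}(\mS)$. Injectivity at every finite level forces injectivity of the limit map, and since $W^\mathrm{ia}(\mS')$ is compact its continuous image is closed in the Hausdorff group $W^\mathrm{pa}(\mS)$, giving the desired closed embedding.

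I expect the main obstacle to be conceptual rather than computational: it lies in part~(2), in recognising that the proposition's assertion that $\wh{\iota}$ ``induces the original $\iota$ upon factoring out base groups'' is precisely the compatibility of the embeddings with the two inverse-system projections. Once this identification is made, the limit argument is routine, and the degree bookkeeping in part~(1) (tracking that each $\wh{r} \ge 2$) is similarly mechanical. The only genuinely new ingredient beyond Proposition~\ref{prop:P-embed} is the explicit diagonal construction at the base of the induction.
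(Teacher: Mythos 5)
Your proof is correct and follows the route the paper intends: the paper leaves the corollary as an immediate consequence of Proposition~\ref{prop:P-embed}, namely an induction feeding each output P-embedding (of degree $\ge 2$, guaranteed by the formula for $\wh{r}$) back in as input, with compatibility in part~(2) coming from the clause that $\wh{\iota}$ induces $\iota$ modulo the base groups. Your explicit base case (diagonal copy of $S_1$ in the base group $S_1^{\Omega_0}$ together with the fibres $\Gamma(\omega)=\{f\mid f(\sigma_0)=\omega\}$, giving degree $\norm{\Omega_1}^{\norm{\Omega_0}-1}\ge 2$) correctly supplies the degree-$\ge 2$ starting point that the induction requires.
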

 

\section{Non-co-Hopfian iterated wreath products}\label{sec:selfsimilar}

In this section we prove Corollary~\ref{cor:B}, Theorem~\ref{thm:C}
and Corollary~\ref{cor:D}.  Recall that being ``permutationally
isomorphic to a subgroup'' of a permutation group is essentially the
same as being P-embedded of degree~$1$.

\begin{lem}\label{lem:cwrembedding}
  For $i \in \{1,2\}$ let $H_i \le \Sym(\Delta_i)$ and
  $G_i \le \Sym(\Omega_i)$ be non-trivial finite permutation groups, and
  suppose that
  \[ \iota_i \colon H_i \to \wt{H}_i
    \le G_i \qquad \text{and} \qquad \gamma_i \colon \Delta_i \to
    \wt{\Delta}_i \subseteq \Omega_i
  \]
  provide permutation isomorphisms of $H_i$ to subgroups of $G_i$.
  
  Then the primitive wreath product
  $V = H_1 \,\cwr\, H_2 \le \Sym(\Delta_1^{\, \Delta_2})$ is
  permutationally isomorphic to a subgroup of the primitive wreath
  product $W = G_1 \,\cwr\, G_2 \le \Sym(\Omega_1^{\, \Omega_2})$.
  More specifically, writing $\Phi = \Delta_1^{\, \Delta_2}$, there is
  a permutation isomorphism via
  \[
  \wh{\iota} \colon V \to \wt{V} \le W \qquad \text{and}
  \qquad \wh{\gamma} \colon \Phi \to \wt{\Phi} \subseteq
  \Omega_1^{\, \Omega_2},
  \]
  such that $\wh{\iota}$ induces, upon factoring out the base
  groups on both sides, the original
  isomorphism~$\iota_2 \colon H_2 \to \wt{H}_2$.
\end{lem}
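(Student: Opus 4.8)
The plan is to mimic the construction in the proof of Proposition~\ref{prop:P-embed}, but in the easier situation where both sides carry product actions, so that a P-embedding of degree~$1$ (i.e.\ an honest permutation isomorphism) suffices and no passage to sets of functions is needed. Following the set-up recalled there, I would identify $V = H_1 \cwr H_2$ with $B \rtimes H_2$, where $B = H_1^{\, \Delta_2}$ is the base group acting on $\Phi = \Delta_1^{\, \Delta_2}$ by $f^{(h_\varepsilon)_\varepsilon}(\tau) = f(\tau)^{h_\tau}$ for $(h_\varepsilon)_\varepsilon \in B$ and by $f^h(\tau) = f(\tau^{h^{-1}})$ for $h \in H_2$; likewise I would identify $W = G_1 \cwr G_2$ with $A \rtimes G_2$, where $A = G_1^{\, \Omega_2}$ is its base group.

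First I would define the group isomorphism $\wh{\iota}$. On the top groups it is simply $\iota_2 \colon H_2 \to \wt{H}_2$. On the base group I would ``spread out and upgrade'' via the rule $\iota_1' \colon B \to A$, $(h_\varepsilon)_{\varepsilon \in \Delta_2} \mapsto (t_\sigma)_{\sigma \in \Omega_2}$ with $t_\sigma = \iota_1(h_\varepsilon)$ whenever $\sigma = \gamma_2(\varepsilon) \in \wt{\Delta}_2$ and $t_\sigma = 1$ for $\sigma \in \Omega_2 \setminus \wt{\Delta}_2$; this is well defined because $\gamma_2$ is injective. A routine check --- using that $\wt{\Delta}_2$ is $\wt{H}_2$-invariant and that $(\iota_2,\gamma_2)$ intertwines the $H_2$- and $\wt{H}_2$-actions --- shows $\iota_1'(b^{h}) = \iota_1'(b)^{\iota_2(h)}$, so $\iota_1'$ and $\iota_2$ combine into a homomorphism $\wh{\iota} \colon B \rtimes H_2 \to A \rtimes G_2$; injectivity of $\iota_1$ and $\gamma_2$ makes $\wh{\iota}$ injective, and by construction it induces $\iota_2$ after factoring out the base groups, as required.

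Next I would define the bijection onto its image. Fixing once and for all a point $\omega^{\ast} \in \Omega_1$, I would set, for $f \in \Phi$,
\[
\wh{\gamma}(f)(\sigma) =
\begin{cases}
\gamma_1(f(\varepsilon)) & \text{if $\sigma = \gamma_2(\varepsilon) \in \wt{\Delta}_2$,} \\
\omega^{\ast} & \text{if $\sigma \in \Omega_2 \setminus \wt{\Delta}_2$,}
\end{cases}
\]
and put $\wt{\Phi} = \wh{\gamma}(\Phi) \subseteq \Omega_1^{\, \Omega_2}$. Injectivity of $\wh{\gamma}$ is immediate from injectivity of $\gamma_1$ and $\gamma_2$, since $f$ is recovered from the values on $\wt{\Delta}_2$ alone. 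The heart of the matter is then the intertwining identity $\wh{\gamma}(f^{v}) = \wh{\gamma}(f)^{\wh{\iota}(v)}$, which it suffices to verify for $v$ in the base group $B$ and for $v = h \in H_2$, as these generate $V$. For $v \in B$ the identity on the coordinates $\sigma = \gamma_2(\varepsilon)$ reduces to $\gamma_1(f(\varepsilon)^{h_\varepsilon}) = \gamma_1(f(\varepsilon))^{\iota_1(h_\varepsilon)}$, which is precisely the intertwining property of $(\iota_1,\gamma_1)$; for $v = h$ the coordinates in $\wt{\Delta}_2$ are handled via $\gamma_2(\varepsilon)^{\iota_2(h)^{-1}} = \gamma_2(\varepsilon^{h^{-1}})$.

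The one point that needs genuine care, rather than routine bookkeeping, is the treatment of the coordinates lying outside $\wt{\Delta}_2$. On these the base group $\wt{B}$ acts trivially, yet $\wt{H}_2$ still permutes them, so the values of $\wh{\gamma}(f)$ there must be at the same time independent of $f$ (to preserve the base-group identity) and invariant under the $\wt{H}_2$-action (to preserve the top-group identity). The constant value $\omega^{\ast}$ resolves both demands simultaneously: it is visibly $\wt{H}_2$-invariant, and since $\wt{\Delta}_2$ is $\wt{H}_2$-invariant its complement is permuted within itself, whence $\wh{\gamma}(f^h)(\sigma) = \omega^{\ast} = \wh{\gamma}(f)(\sigma^{\iota_2(h)^{-1}})$ for $\sigma \notin \wt{\Delta}_2$. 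This is exactly the simplification that degree~$1$ buys over Proposition~\ref{prop:P-embed}. With the intertwining identity established, $\wt{\Phi}$ is $\wt{V}$-invariant and $(\wh{\iota},\wh{\gamma})$ furnishes the desired permutation isomorphism of $V$ onto a subgroup of $W$.
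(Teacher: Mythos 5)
Your proposal is correct and follows essentially the same route as the paper: identify both wreath products as semidirect products, spread the base group $H_1^{\,\Delta_2}$ over the coordinates in $\wt{\Delta}_2$ (trivially elsewhere), and extend functions $\Delta_2\to\Delta_1$ to $\Omega_2\to\Omega_1$ by a fixed constant $\omega^{\ast}$ off $\wt{\Delta}_2$. The only difference is cosmetic: the paper assumes for notational simplicity that $\gamma_1,\gamma_2$ are identity inclusions, whereas you carry $\iota_1,\gamma_1,\gamma_2$ through explicitly and spell out the intertwining checks the paper leaves as routine.
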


\begin{proof}
  Similar to the proof of Proposition~\ref{prop:P-embed} we identify
  $W = G_1 \,\cwr\, G_2$ with $A \rtimes G_2$, where
  $A = G_1^{\, \Omega_2}$ denotes the base group, and
  $V = H_1 \,\cwr\, H_2$ with $B \rtimes H_2$, where
  $B = H_1^{\, \Delta_2}$ denotes the base group.  For notational
  simplicity we may assume, for $i \in \{1,2\}$, that $\gamma_i$ is
  just the identity map on $\Delta_i = \wt{\Delta}_i$.

  We define $\wt{V} = \wt{B} \rtimes \wt{H}_2 \le W$, where $\wt{B} \le
  A$ denotes the image of $B$ under the isomorphism
  $\iota' \colon B \to \wt{B}$ given by 
  \[
  (h_\delta)_{\delta \in \Delta_2} \mapsto (g_\omega)_{\omega \in
    \Omega_2}, \quad \text{where} \quad g_\omega =
  \begin{cases}
    h_\omega & \text{if $\omega \in \Delta_2$,} \\
    1 & \text{otherwise.}
  \end{cases}
  \]
  A routine verification shows that $\iota_2$ and $\iota'$ induce an
  isomorphism $\wh{\iota} \colon V \to \wt{V}$ of groups.

  Recall that we write $\Phi = \Delta_1^{\, \Delta_2}$, and fix an
  arbitrary point $\alpha \in \Omega_1$.  We obtain a bijection
  \[
  \wh{\gamma} \colon \Phi \to \wt{\Phi} = \{ \wh{\gamma}(f) \mid
  f \in \Phi \} \subseteq \Omega_1^{\, \Omega_2}
  \]  
  by setting, for each $f \colon \Delta_2 \to \Delta_1$ in $\Phi$,
  \[
  \wh{\gamma}(f) = \wt{f} \colon \Omega_2 \to \Omega_1, \quad
  \wt{f}(\omega) =
  \begin{cases}
    f(\omega) & \text{if $\omega \in \Delta_2$,} \\
    \alpha & \text{otherwise.}
  \end{cases}
  \]
  It is routine to verify that $(\wh{\iota},\wh{\gamma})$ gives
  the required permutation isomorphism.
\end{proof}

\begin{lem}\label{lem:cwrforgettingmiddle}
  Let $H \le \Sym(\Delta)$, $K \le \Sym(\Psi)$ and
  $G \le \Sym(\Omega)$ be non-trivial finite permutation groups.  Then
  the primitive wreath product
  $V = H \,\cwr\, G \le \Sym(\Delta^\Omega)$ is permutationally
  isomorphic to a subgroup of the primitive wreath product
  $W = H \,\cwr\, (K \,\cwr\, G) \le \Sym(\Delta^{(\Psi^\Omega)})$.
  More specifically, writing $\Phi = \Delta^\Omega$ and identifying
  the top groups of $V$ and $W$ as usual with~$G$, there is a
  permutation isomorphism via
  \[
  \iota \colon V \to \wt{V} \le W \qquad \text{and}
  \qquad \gamma \colon \Phi \to \wt{\Phi} \subseteq
  \Delta^{(\Psi^\Omega)}
  \]
  such that $\iota$ induces, upon factoring out the relevant base
  groups, the identity map between the top groups identified with~$G$.
\end{lem}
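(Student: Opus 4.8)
The plan is to follow the same template as Lemma~\ref{lem:cwrembedding} and Proposition~\ref{prop:P-embed}: identify $W = H \,\cwr\, (K \,\cwr\, G)$ with $A \rtimes (K \,\cwr\, G)$, where $A = H^{(\Psi^\Omega)}$ is the base group and $K \,\cwr\, G$ permutes the coordinate set $\Psi^\Omega$, and identify $V = H \,\cwr\, G$ with $B \rtimes G$, where $B = H^\Omega$ is the base group. The whole construction rests on producing a $G$-equivariant injection of the coordinate set $\Omega$ of $V$ into the coordinate set $\Psi^\Omega$ of $W$, compatible with the coordinate action of the top factor $G \le K \,\cwr\, G$ (taken with trivial $K$-base). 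Once such an injection is available, one mimics the ``support on the image'' recipe used in the earlier proofs.

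For the injection, fix two distinct points $\psi_0, \psi_1 \in \Psi$ --- available because $K$ is non-trivial, so $\norm{\Psi} \ge 2$ --- and, for each $\omega \in \Omega$, define the marking function $\phi_\omega \colon \Omega \to \Psi$ by $\phi_\omega(\omega') = \psi_1$ if $\omega' = \omega$ and $\phi_\omega(\omega') = \psi_0$ otherwise. I would check that $\omega \mapsto \phi_\omega$ is injective and $G$-equivariant for the coordinate action $\phi^g(\omega') = \phi(\omega'^{\,g^{-1}})$, that is $(\phi_\omega)^g = \phi_{\omega^g}$; this is immediate, since $\phi_\omega$ takes the value $\psi_1$ exactly at $\omega$. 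Write $\wt{\Omega} = \{ \phi_\omega \mid \omega \in \Omega \} \subseteq \Psi^\Omega$ for the resulting $G$-invariant copy of $\Omega$.

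Then I define $\wt{V} = \wt{B} \rtimes G \le W$, with $G$ the top factor of $K \,\cwr\, G$, and $\wt{B} \le A$ the image of $B$ under the isomorphism $\iota' \colon B \to \wt{B}$ sending $(h_\omega)_{\omega \in \Omega}$ to the tuple $(t_\phi)_{\phi \in \Psi^\Omega}$ with $t_{\phi_\omega} = h_\omega$ and $t_\phi = 1$ for $\phi \notin \wt{\Omega}$; this is well defined because $\phi_\omega$ determines $\omega$. The $G$-equivariance of $\omega \mapsto \phi_\omega$ guarantees that $G$ normalises $\wt{B}$ and that the conjugation action matches, so that $\iota'$ and the identity on $G$ assemble into a group isomorphism $\iota \colon V \to \wt{V}$ inducing the identity on the top groups, as required. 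For the point map, fix a base point $\delta_0 \in \Delta$ and define $\gamma \colon \Phi = \Delta^\Omega \to \Delta^{(\Psi^\Omega)}$ by sending $f$ to the function $\wt{f}$ with $\wt{f}(\phi_\omega) = f(\omega)$ on $\wt{\Omega}$ and $\wt{f} \equiv \delta_0$ off $\wt{\Omega}$; this is visibly injective onto its image $\wt{\Phi}$.

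The remaining verification is routine bookkeeping and is the only place where care is needed: one checks $\gamma(f^b) = \gamma(f)^{\iota(b)}$ for $b \in B$ and $\gamma(f^g) = \gamma(f)^{\iota(g)}$ for $g \in G$, working separately on coordinates in $\wt{\Omega}$ and off it. The base-group identity reduces to $t_{\phi_\omega} = h_\omega$, while the top-group identity reduces precisely to the relation $(\phi_\omega)^{g^{-1}} = \phi_{\omega^{g^{-1}}}$ together with the $G$-invariance of $\wt{\Omega}$, which ensures that the constant value $\delta_0$ is preserved off $\wt{\Omega}$. The main obstacle --- such as it is --- lies in pinning down the correct equivariant embedding $\Omega \hookrightarrow \Psi^\Omega$; after that every step is an exact analogue of the calculations already carried out in the proofs of Proposition~\ref{prop:P-embed} and Lemma~\ref{lem:cwrembedding}.
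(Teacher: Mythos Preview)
Your proof is correct and follows essentially the same template as the paper's: embed $\Omega$ $G$-equivariantly into $\Psi^\Omega$ via characteristic-function-type elements, then push the base group and the point map through coordinate by coordinate, filling in a fixed default value off the image. The one variation is that where you use a single marking function $\phi_\omega$ per point (a degree-$1$ permutation embedding), the paper instead associates to each $\omega$ the whole packet $\Gamma(\omega)$ of $\lvert\Psi\rvert-1$ functions taking a fixed value $\psi$ at $\omega$ and some other constant value on $\Omega\smallsetminus\{\omega\}$ (a P-embedding of degree $\lvert\Psi\rvert-1$); your choice is the simpler one and suffices for the same conclusion.
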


\begin{proof}
  We identify $V = H \,\cwr\, G$ with $B \rtimes G$, where
  $B = H^\Omega$ denotes the base group, and $U = K \,\cwr\, G$ with
  $C \rtimes \wt{G}$, where $C = K^\Omega$ denotes the base group and
  $\wt{G}$ is just a copy of~$G$.  We identify $W = H \,\cwr\, U$ with
  $A \rtimes (C \rtimes \wt{G})$, where $A = H^{\Psi^\Omega}$ denotes the base group. 
  
  Fix an element $\psi \in \Psi$.
  Setting, for $\omega \in \Omega$,
  \begin{multline*}
    \Gamma(\omega) = \big\{ f \colon \Omega \to \Psi \mid f(\omega)=\psi \text{ and $f$ is constant and } \\ \text{ different from $\psi$ on $\Omega\smallsetminus \{\omega\}$}  \big\}
  \end{multline*} 
  we obtain a P-embedding of degree $r =\norm{\Psi} -1$
  of $G \le \Sym(\Omega)$ into $U \le \Sym(\Psi^\Omega)$ via
  \[
  \iota' = \mathrm{id}_G \colon G \to \wt{G} \quad \text{and} \quad
  \Gamma \colon \Omega \to \wt{\Omega} = \{ \Gamma(\omega) \mid \omega
  \in \Omega \} \subseteq \mP_r(\Psi^\Omega).
  \]
  Next we define $\wt{V} = \wt{B} \rtimes \wt{G} \le W$, where $\wt{B} \le
  A$ denotes the image of $B$ under the isomorphism
  \[
  \iota'' \colon B \to \wt{B}, \quad (h_\omega)_{\omega \in \Omega} \mapsto
  (\wt{h}_f)_{f \in \Psi^\Omega}, \quad \text{where }
  \wt{h}_f =
  \begin{cases}
    h_\omega & \text{if $f \in \Gamma(\omega)$,} \\
    1 & \text{otherwise.}
  \end{cases}
  \]
  A routine verification shows that $\iota'$ and $\iota''$ induce an
  isomorphism $\iota \colon V \to \wt{V}$ of groups.

  Recall that we write $\Phi = \Delta^\Omega$, and fix an
  arbitrary point $\alpha \in \Delta$.  We obtain a bijection
  \[
  \gamma \colon \Phi \to \wt{\Phi} = \{ \gamma(F) \mid
  F \in \Phi \} \subseteq \Delta^{(\Psi^\Omega)}
  \]  
  by setting
  \[
  \gamma(F) = \wt{F} \colon \Psi^\Omega \to \Delta, \quad \wt{F}(f) =
  \begin{cases}
    F(\omega) & \text{for $f \in \Gamma(\omega)$,} \\
    \alpha & \text{otherwise.}
  \end{cases}
  \]
  It is routine to verify that $(\iota,\gamma)$ gives
  the required permutation isomorphism.
\end{proof}

\begin{prop} \label{prop:subseq-constr}
  Let $\mS = (S_k)_{k \in \mathbb{N}}$ be a sequence of non-trivial
  finite permutation groups $S_k \le \Sym(\Omega_k)$ and let
  $\mS^\circ = (S_{m(j)})_{j \in \mathbb{N}}$ for
  $m(1) < m(2) < \ldots$ be a subsequence of $\mS$.

  \begin{itemize}
  \item[$\mathrm{(1)}$] For every $n \in \mathbb{N}$, the $n$th
    iterated wreath product $W^{\mathrm{pa}}(\mS^\circ_n)$ of type
    $\mS^\circ_n = (S_{m(1)},\ldots,S_{m(n)})$ w.r.t.\ product actions
    is permutationally isomorphic to a subgroup of the $m(n)$th iterated
    wreath product $W^{\mathrm{pa}}(\mS_{m(n)})$ of type $\mS_{m(n)}$
    $= (S_1,\ldots,S_{m(n)})$ w.r.t.\ product actions.
  \item[$\mathrm{(2)}$] The permutation isomorphisms can be chosen
    compatible with one another so that they induce an embedding of
    $W^\mathrm{pa}(\mS^\circ)$ into $W^\mathrm{pa}(\mS)$ as a closed
    subgroup.
  \end{itemize}
\end{prop}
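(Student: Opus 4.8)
The plan is to establish part~(1) by induction on the length $n$ of the subsequence, arranging the construction so that part~(2) follows by checking compatibility with the bonding maps and passing to the inverse limit. The relation ``permutationally isomorphic to a subgroup'' is transitive, so it suffices to realise $W^\mathrm{pa}(\mS^\circ_n) \hookrightarrow W^\mathrm{pa}(\mS_{m(n)})$ as a chain of elementary moves of exactly two kinds, which are supplied by the two preceding lemmas: Lemma~\ref{lem:cwrembedding} transports an already-constructed embedding across a common outermost (``base'') factor, while Lemma~\ref{lem:cwrforgettingmiddle} inserts one missing factor just beneath the outermost one.

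For the base case $n=1$ I must embed $W^\mathrm{pa}(\mS^\circ_1) = S_{m(1)}$ into $W^\mathrm{pa}(\mS_{m(1)}) = S_{m(1)} \cwr W^\mathrm{pa}(\mS_{m(1)-1})$. If $m(1)=1$ this is the identity; otherwise I embed $S_{m(1)}$ diagonally into the base group of $S_{m(1)} \cwr W^\mathrm{pa}(\mS_{m(1)-1})$ and take the constant functions as the invariant point set, an elementary construction that is the degenerate (trivial-top-group) case of Lemma~\ref{lem:cwrforgettingmiddle}. For the inductive step I assume an embedding $\Theta_{n-1} \colon W^\mathrm{pa}(\mS^\circ_{n-1}) \hookrightarrow W^\mathrm{pa}(\mS_{m(n-1)})$ is in hand. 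Since both $W^\mathrm{pa}(\mS^\circ_n) = S_{m(n)} \cwr W^\mathrm{pa}(\mS^\circ_{n-1})$ and $W^\mathrm{pa}(\mS_{m(n)}) = S_{m(n)} \cwr W^\mathrm{pa}(\mS_{m(n)-1})$ share $S_{m(n)}$ as outermost factor, Lemma~\ref{lem:cwrembedding} applied with the identity on $S_{m(n)}$ and with $\iota_2 = \Theta_{n-1}$ on the top groups yields
\[
  S_{m(n)} \cwr W^\mathrm{pa}(\mS^\circ_{n-1}) \;\hookrightarrow\; S_{m(n)} \cwr W^\mathrm{pa}(\mS_{m(n-1)}).
\]
I then insert the gap factors $S_{m(n-1)+1}, \ldots, S_{m(n)-1}$ in increasing order, each just beneath $S_{m(n)}$, by repeated use of Lemma~\ref{lem:cwrforgettingmiddle} with $H = S_{m(n)}$ and with the current inner wreath product as $G$; after the final insertion the target has become $S_{m(n)} \cwr W^\mathrm{pa}(\mS_{m(n)-1}) = W^\mathrm{pa}(\mS_{m(n)})$. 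Composing these maps defines $\Theta_n$ and completes part~(1).

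For part~(2) I must show the $\Theta_n$ constitute a morphism of inverse systems, that is $\tau_n \circ \Theta_n = \Theta_{n-1} \circ \sigma_n$, where $\sigma_n \colon W^\mathrm{pa}(\mS^\circ_n) \to W^\mathrm{pa}(\mS^\circ_{n-1})$ is the source bonding map forgetting the outermost factor $S_{m(n)}$ and $\tau_n \colon W^\mathrm{pa}(\mS_{m(n)}) \to W^\mathrm{pa}(\mS_{m(n-1)})$ is the composite target bonding map, which projects onto the innermost block $W^\mathrm{pa}(\mS_{m(n-1)})$. This is exactly where the ``more specifically'' clauses of the two lemmas are used: the Lemma~\ref{lem:cwrembedding} step induces $\Theta_{n-1}$ upon factoring out the base groups, and each Lemma~\ref{lem:cwrforgettingmiddle} step induces the identity on its top group, so that forgetting the freshly inserted factors recovers the map present before the insertion. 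Chaining these identities shows that $\Theta_n$ followed by the projection onto $W^\mathrm{pa}(\mS_{m(n-1)})$ equals $\Theta_{n-1}$ preceded by $\sigma_n$, i.e.\ the required square commutes. As $(m(n))_n$ is cofinal in $\mathbb{N}$, the target system is cofinal in the defining system of $W^\mathrm{pa}(\mS)$, so $\varprojlim \Theta_n$ is a well-defined continuous injective homomorphism $W^\mathrm{pa}(\mS^\circ) \to W^\mathrm{pa}(\mS)$; being a continuous injection from a compact group into a Hausdorff group, it is automatically a closed embedding.

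The hard part will be the compatibility check in part~(2): verifying that the Lemma~\ref{lem:cwrembedding} step, composed with the entire chain of Lemma~\ref{lem:cwrforgettingmiddle} insertions, commutes with the projection onto the fixed innermost block $W^\mathrm{pa}(\mS_{m(n-1)})$. This is not automatic, and it is precisely why the two lemmas are phrased together with their assertions about the maps induced on top groups. The delicacy lies in tracking these induced maps through a composite in which the relevant top group $G$ grows by one factor at each insertion step, and in confirming that the base-case diagonal embedding, once carried along by all later Lemma~\ref{lem:cwrembedding} steps, leaves these projections undisturbed.
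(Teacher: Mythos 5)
Your proposal is correct and follows essentially the same route as the paper: induction on $n$, with the base case given by the diagonal embedding onto constant functions, the inductive step given by transporting the embedding across the common outer factor $S_{m(n)}$ (Lemma~\ref{lem:cwrembedding} with the identity) and then inserting the gap factors one at a time via repeated applications of Lemma~\ref{lem:cwrforgettingmiddle}, and part~(2) deduced from the ``more specifically'' compatibility clauses of those lemmas before passing to the inverse limit. Your write-up is in fact slightly more explicit than the paper's, which leaves the Lemma~\ref{lem:cwrembedding} step implicit.
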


\begin{proof}
  We prove (1) by induction on $n \in \mathbb{N}$.

  For $n =1$, it suffices to observe that $S_{m(1)}$ is
  permutationally isomorphic to a subgroup of the primitive wreath
  product $S_{m(1)} \,\cwr\, W^{\mathrm{pa}}(\mS_{m(1)-1})$; e.g.,
  take the diagonal embedding of $S_{m(1)}$ into the base group acting
  on constant functions.

  Now suppose that $n \ge 2$.  By induction,
  $W^{\mathrm{pa}}(\mS^\circ_{n-1})$ is permutationally isomorphic to
  a subgroup of $W^{\mathrm{pa}}(\mS_{m(n-1)})$.  Repeated application
  of Lemma~\ref{lem:cwrforgettingmiddle} shows that
  $W^{\mathrm{pa}}(\mS^\circ_n) = S_{m(n)} \,\cwr\,
  W^{\mathrm{pa}}(\mS^\circ_{n-1})$
  is permutationally isomorphic to a subgroup of
  \[
  S_{m(n)} \,\cwr\, ( S_{m(n-1)+l} \,\cwr\,
  W^{\mathrm{pa}}(\mS_{m(n-1)+(l-1)})) = S_{m(n)} \,\cwr\,
  W^{\mathrm{pa}}(\mS_{m(n-1)+l})
  \]
  for $l \in \{1,\ldots,m(n)-m(n-1)-1\}$.  The final value for $l$
  yields the requested permutation isomorphism to $W^{\mathrm{pa}}(\mS_{m(n)})$.

  Claim (2) follows from the above construction and the
  compatibility assertion built into Lemma~\ref{lem:cwrforgettingmiddle}.
\end{proof}

The proofs of Corollary~\ref{cor:B} and Theorem~\ref{thm:C} are now
immediate. 

\begin{proof}[Proof of Corollary~\ref{cor:B}]
  Choose representatives $X_1,X_2, \ldots$ for the isomorphism types
  of finite simple groups in~$\mX$, and consider the sequence
  $\mS_\mX$ consisting of
  \[
  X_1, \,\, X_1,X_2, \,\, X_1,X_2,X_3, \,\, \ldots, \,\,
  X_1,X_2\ldots,X_n, \,\, \ldots.
  \]
  Then every countably based profinite group with composition factors
  in $\mX$ has a composition series that forms, up to isomorphisms, a
  subsequence of $\mS_\mX$.  Now apply Theorem~\ref{thm:A} and 
  Proposition~\ref{prop:subseq-constr}.
\end{proof}

\begin{proof}[Proof of Theorem~\ref{thm:C}]
  Suppose that the terms of $\mS = (S_k)_{k \in \mathbb{N}}$, with
  $S_k \le \Sym(\Omega_k)$, are eventually permutationally isomorphic to
  subgroups of later terms.  Choose $n_0 \in \mathbb{N}$ and a
  strictly increasing, but non-identity function
  $m \colon \mathbb{N}_{> n_0} \to \mathbb{N}_{> n_0}$ such that: for
  each $j \in \mathbb{N}_{> n_0}$, the permutation group
  $S_j \le \Sym(\Omega_j)$ is permutationally isomorphic to a subgroup of
  $S_{m(j)} \le \Sym(\Omega_{m(j)})$.

    Arguing similarly to the proof of Proposition~\ref{prop:subseq-constr}
  and applying, in addition, Lemma~\ref{lem:cwrembedding} in the
  induction step, we obtain an embedding of $W^\mathrm{pa}(\mS)$ as a
  proper closed subgroup into itself.
\end{proof}

Theorem~\ref{thm:C} highlights two natural questions. Do there exist
infinitely iterated wreath products of finite simple groups w.r.t.\
product actions that are co-Hopfian?  To what extent are the
hypotheses of Theorem~\ref{thm:C} irredundant?  Corollary~\ref{cor:D}
provides positive answers to both questions, when we restrict ourselves
to minimal finite non-abelian simple groups.

\begin{proof}[Proof of Corollary~\ref{cor:D}]
  In view of Theorem~\ref{thm:C} only one implication remains to be
  shown.  Suppose that $\mS$ consists of minimal finite non-abelian
  simple groups $S_k \le \Sym(\Omega_k)$, each equipped with a
  transitive permutation action and such that the terms of $\mS$ are
  not eventually permutationally isomorphic to a subgroup of a later
  terms.  This means that, for every $n_0 \in \mathbb{N}$, there
  exists $k \ge n_0$ such that $S_k \le \Sym(\Omega_k)$ is
  permutationally isomorphic to a subgroup of, and hence equivalent to
  $S_j \le \Sym(\Omega_j)$ for only finitely many $j \in \mathbb{N}$.
  Denote by $G = W^\mathrm{pa}(\mS)$ the infinitely iterated wreath
  product of type $\mS$ w.r.t.~product actions.  Suppose further that
  $H \le_\mathrm{c} G$ with $H\cong G$. We need to show that $H=G$.

  Since the action of each $S_j$ on $\Omega_j$ is transitive, it is
  easy to see that the open normal subgroups of $G$ form a descending
  chain $G = N_0 \supseteq N_1 \supseteq \ldots$, where 
  \[
  N_l = \mathrm{ker}(G\rightarrow W^{\mathrm{pa}}(\mS_l)) \quad \text{for
    $l\in \mathbb{N}$.}
  \]
  The group $H \cong G$  has a corresponding chain of open
  normal subgroups   $H = M_0 \supseteq M_1 \supseteq \ldots$.

  For every $n \in \mathbb{N}$ we choose $m(n) \in \mathbb{N}$ such
  that $H \cap N_n = M_{m(n)}$ and observe that
  $G/N_{m(n)} \cong H/M_{m(n)} \cong HN_n/N_n \le G/N_n$ implies
  $m(n) \le n$.  Moreover, it is enough to show that $m(n) \ge n$,
  hence $n=m(n)$, for infinitely many $n \in \mathbb{N}$; for this implies
  $H N_n = G$ for infinitely many $n$, and thus $H= G$.
   
  Now, start with any large number $n_0 \in \mathbb{N}$.  By our
  hypotheses, there is a $k \ge n_0$ such that
  \begin{multline*}
    n = \min \{ j \in \mathbb{N} \mid \text{$S_k \le \Sym(\Omega_k)$ is
      not equivalent to $S_l \le \Sym(\Omega_l)$} \\
    \text{for any $l>j$} \} \in \mathbb{N}_{\ge n_0}
  \end{multline*}
  is finite.  Clearly, the set of composition factors of
  $N_n$ is $\{S_j \mid j>n\}$. In particular, the group
  $N_n$ does not have any composition factors isomorphic to $S_k$.
  
  Set $m = m(n)$ and assume, for a contradiction, that $m<n$. Then
  $N_n \ge H \cap N_n = M_m \ge M_{n-1}$.  Observe that there exist
  $K \trianglelefteq_\mathrm{o} M_{n-1}$ such that $X = M_{n-1}/K$ is
  isomorphic to $S_k$.  We claim that $X$ is also a composition factor
  of $N_n$.  Indeed, by intersecting a composition series for $N_n$
  (possessing exclusively minimal non-abelian simple factors) with
  $M_{n-1}$ we obtain a subnormal series of $M_{n-1}$ with factors
  that are either soluble or isomorphic to a composition factor of
  $N_n$.  This implies that each composition factor of $M_{n-1}$ is
  either soluble or isomorphic to a composition factor of $N_n$.
  Consequently, $X \cong S_k$ is isomorphic to a composition factor of
  $N_n$, a contradiction.
\end{proof}

\begin{ackn*}
  Some of the results in this paper form part of the second author's
  PhD thesis, Royal Holloway University of London, 2015. The authors would also like to thank the referee for his careful reading and for making several
constructive suggestions regarding the exposition of this work.
\end{ackn*}



\begin{thebibliography}{10}

\bibitem{bk:nottinghamgroup} Yiftach Barnea and Benjamin Klopsch.
  \newblock Index-subgroups of the {N}ottingham group.  \newblock {\em
    Adv. Math.}, 180(1):187--221, 2003.

\bibitem{bgs:branchgroups} Laurent Bartholdi, Rostislav~I. Grigorchuk,
  and Zoran {\v{S}}uni{\'k}.  \newblock Branch groups.  \newblock In
  {\em Handbook of algebra, {V}ol. 3}, pages 989--1112.
  North-Holland, Amsterdam, 2003.

\bibitem{camina:nottinghamgroup} Rachel Camina.  \newblock The
  {N}ottingham group.  \newblock In {\em New horizons in pro-{$p$}
    groups}, volume 184 of {\em Progr.  Math.}, pages
  205--221. Birkh\"auser Boston, Boston, MA, 2000.

\bibitem{dixon:permutationgroups} John~D. Dixon and Brian Mortimer.
  \newblock {\em Permutation groups}, volume 163 of {\em Graduate
    Texts in Mathematics}.  \newblock Springer-Verlag, New York, 1996.

\bibitem{MR2047455} Mikhail Ershov.  \newblock New just-infinite
  pro-{$p$} groups of finite width and subgroups of the {N}ottingham
  group.  \newblock {\em J. Algebra}, 275(1):419--449, 2004.

\bibitem{ej:positivedeficiency} Mikhail Ershov and Andrei
  Jaikin-Zapirain.  \newblock Groups of positive weighted deficiency
  and their applications.  \newblock {\em J. Reine Angew. Math.},
  677:71--134, 2013.

\bibitem{grigorchuk:justinfinitebranch} Rostislav~I. Grigorchuk.
  \newblock Just infinite branch groups.  \newblock In {\em New
    horizons in pro-{$p$} groups}, volume 184 of {\em Progr.  Math.},
  pages 121--179. Birkh\"auser Boston, Boston, MA, 2000.


\bibitem{linear prop} Gundel Klaas, Charles~R. Leedham-Green and
  Wilhelm Plesken.  \newblock {\em Linear pro-{$p$}-groups of finite
    width}, volume~1674 of {\em Lecture Notes in
    Mathematics}. \newblock Springer-Verlag, Berlin.
  
  \bibitem{lucchini:a2generated} Andrea Lucchini.  \newblock A
  2-generated just-infinite profinite group which is not positively
  generated.  \newblock {\em Israel J. Math.}, 141:119--123, 2004.

\bibitem{quick:probabilisticgeneration} Martyn Quick.  \newblock
  Probabilistic generation of wreath products of non-abelian finite
  simple groups. {II}.  \newblock {\em Internat. J. Algebra Comput.},
  16(3):493--503, 2006.

\bibitem{reid:characterization} Colin~D. Reid.  \newblock Inverse
  system characterizations of the (hereditarily) just infinite
  property in profinite groups.  \newblock {\em
    Bull. Lond. Math. Soc.}, 44(3):413--425, 2012.

\bibitem{thompson:a} John~G. Thompson.  \newblock Nonsolvable finite
  groups all of whose local subgroups are solvable.  \newblock {\em
    Bull. Amer. Math. Soc.}, 74:383--437, 1968.

\bibitem{thompson:d} John~G. Thompson.  \newblock Nonsolvable finite
  groups all of whose local subgroups are solvable.  {II} -- {VI}.
  \newblock {\em Pacific J. Math.}, 33:451--536, 1970;
  ibid. 39:483--534, 1971; ibid. 48:511--592, 1973; ibid. 50:215--297,
  1974; ibid. 51:573--630, 1974.

\bibitem{MR3466595} Matteo Vannacci.  \newblock On hereditarily just
  infinite profinite groups obtained via iterated wreath products.
  \newblock {\em J. Group Theory}, 19(2):233--238, 2016.

\bibitem{wilson:profinitegroups} John~S. Wilson.  \newblock {\em
    Profinite groups}, volume~19 of {\em London Mathematical Society
    Monographs. New Series}.  \newblock The Clarendon Press Oxford
  University Press, New York, 1998.

\bibitem{wilson:largehereditarily} John~S. Wilson.  \newblock Large
  hereditarily just infinite groups.  \newblock {\em J. Algebra},
  324(2):248--255, 2010.
\end{thebibliography}
 
 \def\cprime{$'$}

\end{document}